\title{\LARGE\textbf{
Families of degenerating Poincaré-Einstein metrics on $\mathbb{R}^4$}}
\author{Carlos A. \textsc{Alvarado
\quad \quad 
Tristan \textsc{Ozuch}
\quad \quad Daniel A. \textsc{Santiago}}
\vspace{0.2cm}\\
    \textsc{\footnotesize MIT Department of Mathematics\vspace{-0.1cm} Cambridge, MA} \vspace{-0.1cm}
}
\date{}

\documentclass[a4paper,10pt,reqno]{article}

\usepackage[colorinlistoftodos,prependcaption]{todonotes}

\usepackage{lmodern}

\usepackage{amsmath}
\usepackage{amsthm,xcolor}
\usepackage[colorlinks, linkcolor=red]{hyperref}
\usepackage[top=1in, lmargin=0.8in, rmargin=0.7in, marginparwidth=0.8in]{geometry}
\usepackage[english]{babel}
\usepackage{amssymb}
\usepackage{mathrsfs}
\usepackage{mathtools}
\usepackage{bbm}
\usepackage{amsthm}
\usepackage{fancyhdr}
\usepackage{tikz-cd}
\usepackage{comment}
\usepackage{etoolbox}
\usepackage{breqn}
\usepackage{esint}
\usepackage{appendix}
\hypersetup{
    colorlinks=true,
    linkcolor=blue,
    filecolor=magenta,      
    urlcolor=cyan,
	citecolor=blue
}
\usepackage{tikz}
\usepackage{tkz-tab}
\usepackage[center]{caption}
\usepackage{latexsym}
\usepackage{subcaption}
\usepackage{ucs} 

\usepackage[T1]{fontenc}
\usepackage[center]{titlesec}
\usepackage{xcolor}
\usepackage[bottom]{footmisc}
\usepackage{indentfirst}
\usepackage{xpatch}
\usepackage{chngcntr}
\usepackage{pgfplots}
\usepgfplotslibrary{fillbetween}
\usepackage{float}

%\usepackage{biblatex}
%\addbibresource{ref.bib}

\usepackage[colorinlistoftodos,prependcaption]{todonotes}

\pgfplotsset{width=10cm,compat=1.11}

\newcommand{\Addresses}{{% additional braces for segregating \footnotesize
		\bigskip
		\footnotesize
		
	 \textsc{MIT, Dept. of Math., 182 Memorial drive, Cambridge, MA 02139.}\par\nopagebreak
		\textit{E-mail address}, \texttt{ozuch@mit.edu}
	}}

\newtheorem{thm}{Theorem}[section]

\newtheorem{lem}[thm]{Lemma}
\newtheorem{prop}[thm]{Proposition}

\newtheorem{quest}[thm]{Question}

\newtheorem{rem}[thm]{Remark}

\DeclareMathOperator{\Rm}{\operatorname{Rm}}

\DeclareMathOperator{\Ric}{\operatorname{Ric}}

\date{}
\begin{document}

\maketitle 

\begin{abstract}
    We provide the first example of continuous families of Poincaré-Einstein metrics developing cusps on the trivial topology $\mathbb{R}^4$. We also exhibit families of metrics with unexpected degenerations in their conformal infinity only. These are obtained from the Riemannian version of an ansatz of Debever and Pleba\'nski-Demia\'nski. We additionally indicate how to construct similar examples on more complicated topologies.
\end{abstract}

\section*{Introduction}

An \textit{Einstein metric} satisfies for some real number $\Lambda$:
\begin{equation}
    \Ric(g)=\Lambda g.\label{Einstein condition}
\end{equation}
This is a central equation in Geometry and in several instances of Physics, especially in dimension $4$. A \textit{Poincaré-Einstein metric} is a noncompact Einstein metric with a specific asymptotic behavior giving rise to a \textit{conformal boundary metric} at infinity, the simplest example being the Poincaré model for hyperbolic space whose conformal infinity is the round sphere. Poincaré-Einstein metrics were first notably used to construct a number of conformal invariants of the boundary geometry; see \cite{MR837196,MR2858236}. More recently, they have also played an important role in the physics literature in relationship with AdS/CFT correspondence; see \cite{MR1633012,Biquard:2005yq}.

From several perspectives, dimension $4$ is a threshold dimension in topology and geometry. In this dimension, there are three ways for compact Einstein or Poincaré-Einstein metrics on a given manifold to degenerate: orbifold singularity formation, collapsing and cusp formation.

Orbifold formation has been widely studied and is now reasonably understood. Numerous examples of \emph{curves} of such degenerations have been produced in the Kähler and  Poincaré-Einstein settings, see \cite{MR1289837,MR3032330,MR3489703}. \textit{All} such degenerations have moreover been reconstructed by gluing-perturbation \cite{ozu1,ozu2}. 

Despite deep general results such as \cite{MR2188134}, the collapsing and cusp formation remain comparatively mysterious. The collapsing situation has received a lot of attention and many examples of \emph{curves} of Einstein metrics collapsing have been produced on K3 surfaces, see for instance \cite{MR4322391,MR3948228}. The third situation of cusp formation has however never been observed except from ``trivial'' examples of (warped) products of degenerating surfaces and from sequences of metrics requiring infinitely many different topologies \cite{MR2225518,MR2911887}. More concretely, the following question was left open:
\begin{quest}[\cite{MR2160865}]\label{question curves of metrics forming cusp}
    ``Another interesting open question is whether cusps can actually form within a given or fixed component of [the moduli space of Poincaré-Einstein metrics], on a fixed manifold $M$.''
\end{quest}
A simple but not so appealing example showing that this exists is the so-called \textit{topological black hole} metric. The metric is $V(r)^{-1}dr^2 + V(r) d\theta^2 +r^2 g_N$ for $V(r):= -1+r^2 -2m/r^2$ with $m$ large enough and $g_N$ the metric of a hyperbolic surface. Letting $g_N$ degenerate creates a cusp that extends to the conformal infinity. This naive example answers Anderson's question but, to the authors' knowledge, does not seem to have been mentioned before. This is still a $2$-dimensional behavior and we provide many more interesting examples here.

Another intriguing question is whether cusp formation requires some topology -- like orbifold degeneration requires nontrivial $2$-homology. Anderson conjectured that it was the case:
\begin{quest}[\cite{MR2160865}]\label{question cusp on R4}
    ``It would also be very interesting to know if the possible formation of cusps is restricted by the topology of the ambient manifold $M$. [...] One might conjecture for instance that on the $4$-ball cusp formation is not possible.''
\end{quest}
We instead provide explicit examples of continuous families of smooth Poincaré-Einstein metrics on $\mathbb{R}^4$ developing different kinds of cusps. We moreover find curves of metrics \textit{without} any degeneration in the bulk but forming various \textit{conical}, \textit{cusp} or \textit{naked} singularities in their conformal infinity.

\subsection*{Debever and Pleba\'nski-Demia\'nski's local family of metrics}

In this article, we study families of Poincaré-Einstein  metrics exhibiting the above three types of degenerations focusing on the least understood case of cusp formation. These examples are surprisingly \textit{explicitly given in coordinates} and are found in the families of Einstein metrics whose Lorentzian counterparts were discovered by Debever \cite{debever1971type} and which were given in more convenient coordinates by Pleba\'nski-Demia\'nski \cite{PLEBANSKI197698}. These metrics are known in the physics literature as Pleba\'nski-Demia\'nski metrics (PD metrics). PD metrics are algebraically special of Petrov \textit{type D} meaning (in the Riemannian setting) that at every point the selfdual and anti-selfdual parts of the Weyl curvature have repeated eigenvalues. This also equivalent to the \textit{ambiKähler} condition of \cite{MR3574879}: the metric is \textit{conformally Kähler} or \textit{Hermitian} in both orientations. This curvature condition forces toric symmetry by \cite{Goldblatt:1994iw}. 

The metrics of the PD family have a remarkably compact form \eqref{family PD with a}
and depend solely on two related \textit{quartic polynomials} $P$ and $Q$ of one variable. Still, despite their simplicity and their discovery in the early 70's, these explicit metrics, once extended to the Riemannian setting contain in some limits most known examples of Einstein metrics ($\mathbb{S}^4$, $\mathbb{S}^2\times \mathbb{S}^2$, Fubini-Study, Page's metric, Taub-NUT, Taub-Bolt, Eguchi-Hanson, Schwarzschild, Kerr and their AdS counterparts...) that were often discovered much later with  complicated ansatz, see \cite{1981PhRvD..24.1478L} where smooth Ricci-flat and compact Einstein PD metrics are classified. Extensions of these families more generally solve the Einstein-Maxwell equations and include known metrics such as Lebrun's scalar-flat metrics \cite{MR962489}. 

This family also contains families developing \textit{orbifold} singularities in the so-called AdS-Taub-Bolt family. It moreover contains continuous families of metrics exhibiting global \textit{collapsing} bubbling out (Ricci-flat) Taub-NUT or Schwarzschild metrics in the so-called AdS-Taub-NUT (or Pedersen's) metrics or AdS-Schwarzschild families. We will focus on cusp formation here.

\subsection*{Families of Poincaré-Einstein metrics forming cusps}

\subsubsection*{Degeneration in the family of AdS $C$-metrics}

It is now classical in the physics literature that a limit ``without rotation or twisting'' of the PD metrics leads to the well-known AdS $C$-metrics whose Ricci-flat versions were found by Levi-Civita \cite{levicivita} and Weyl \cite{Weyl:1917gp} in the 1910's(!). In this family, we first find a $2$-dimensional moduli space of smooth Poincaré-Einstein metrics on $\mathbb{R}^4$ containing the hyperbolic $4$-metric and whose limiting behaviors include metrics forming one or two cusps. A significant asymptotic quantity of Poincaré-Einstein metrics is the \textit{renormalized volume} defined in \cite{gra}. Despite the drastic degenerations presented in this article, the renormalized volume stays bounded.

\begin{thm}[Section \ref{section degeneration ads c metric}]\label{thm ads C metrics intro}
    There exists a smooth family of smooth Poincaré-Einstein metrics on $\mathbb{R}^4$ parametrized by an open region $\Omega$ in $\mathbb{R}^2$. Approaching some points at the boundary $\partial\Omega$, the metrics converge smoothly to the hyperbolic space or degenerate forming one or two codimension $2$ cusps. These cusps have asymptotic behaviors : $$dr_1^2+ae^{-r_1}d\theta_1^2+dr_2^2+ bd\theta_2^2 \;\;\text{ for }\;\;  r_1\in [0,+\infty),\; r_2\in[0,1], \; \theta_1,\theta_2\in[0,2\pi],$$ for $a,b>0$ in the bulk of the manifold, and $dr^2+ae^{-r}d\theta_1^2+bd\theta_2^2$ at conformal infinity with $r\in [0,+\infty)$. These examples have uniformly bounded renormalized volume.
    \end{thm}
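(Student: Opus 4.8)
The plan is to start from the Riemannian AdS $C$-metric obtained from \eqref{family PD with a} by switching off the rotation and NUT parameters. In the Debever--Pleba\'nski--Demia\'nski coordinates it reads, after fixing the normalisation $\Lambda=-3$,
\[
g \;=\; \frac{1}{(x-y)^{2}}\Bigl(\frac{dx^{2}}{P(x)}+P(x)\,d\theta_1^{2}+\frac{dy^{2}}{Q(y)}+Q(y)\,d\theta_2^{2}\Bigr),
\]
with $P$ and $Q$ the two related quartics appearing in \eqref{family PD with a}; modulo the affine reparametrisations $x\mapsto ax+b$, the induced change of $y$, and the residual overall rescaling, their coefficients reduce to a two-parameter family. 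The variable $x$ lies between two consecutive real roots $x_1<x_2$ of $P$, where $P>0$; $y$ lies below $x_1$ where $Q>0$; and the locus $\{x=y\}$ is the conformal infinity. First I would carve out the region $\Omega\subset\mathbb{R}^{2}$ of parameters for which (i) the circle $\partial_{\theta_1}$ closes off smoothly, with one fixed period, at each of $x=x_1,x_2$, and $\partial_{\theta_2}$ likewise at the endpoints of the $y$-interval --- this is a genuine constraint relating the values $P'(x_i)$ and $Q'(y_j)$ that must be solved, not merely checked; (ii) no conical or orbifold point survives, so the total space is diffeomorphic to $\mathbb{R}^{4}$ rather than, say, $S^{2}\times\mathbb{R}^{2}$ or a quotient; and (iii) $\rho^{2}g$ extends smoothly across $\{x=y\}$ to a metric $\hat g$ on $S^{3}$, so that $(M,g)$ is Poincaré--Einstein. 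For $p\in\Omega$ the metric $g_p$ is then smooth, complete, with $\Ric(g_p)=-3g_p$, and I would pin down the parameter value (or boundary locus) realising $\mathbb{H}^{4}$ with its round conformal infinity.

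The second step is the degeneration analysis of Section~\ref{section degeneration ads c metric}. Along a path $p(t)\to p_\infty\in\partial\Omega$, the boundary points split according to which roots of $P$ and $Q$ collide, diverge, or cross. When the colliding roots carry no topology I expect, after pulling back by explicit diffeomorphisms, $C^{\infty}_{\mathrm{loc}}$ convergence $g_{p(t)}\to g_{\mathbb{H}^{4}}$ together with convergence of the conformal infinities to the round $S^{3}$. The cusp mechanism is as follows: a simple root $x_i$ of $P$ is an axis where $\frac{dx^{2}}{P(x)}+P(x)\,d\theta_1^{2}$ closes up smoothly, contributing an $\mathbb{R}^{2}$-factor to an $S^{2}$-fibre; when $x_i$ becomes a \emph{double} root, $P(x)\sim c\,(x-x_i)^{2}$, so with $\rho=|x-x_i|$ this factor degenerates to $\frac{d\rho^{2}}{c\rho^{2}}+c\rho^{2}\,d\theta_1^{2}$, that is, to the $2$-dimensional hyperbolic cusp $dr_1^{2}+a\,e^{-r_1}d\theta_1^{2}$ after a rescaling $r_1\propto-\log\rho$; the transverse $(y,\theta_2)$-directions survive as the cylinder $dr_2^{2}+b\,d\theta_2^{2}$, $r_2\in[0,1]$, capped into an $S^{2}$ by the $y$-axes. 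This yields one codimension-$2$ cusp; when both finite roots of $P$ degenerate one gets two. Restricting the same root collision to $\{x=y\}$ produces the stated cusp $dr^{2}+a\,e^{-r}d\theta_1^{2}+b\,d\theta_2^{2}$ in the conformal infinity. The delicate point here is to establish these limits in the pointed smooth (Cheeger--Gromov, and smooth up to the conformal boundary) sense, with exactly the stated model metrics and asymptotics, rather than merely in Gromov--Hausdorff distance.

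For the renormalised volume I would invoke the four-dimensional Chern--Gauss--Bonnet-type formula $\mathrm{Vol}_{\mathrm{ren}}(M,g)=\tfrac{4\pi^{2}}{3}\chi(M)-c\int_{M}|W_g|^{2}\,dV_g$ for a universal constant $c>0$. Since $\chi(\mathbb{R}^{4})=1$, this already gives the upper bound $\mathrm{Vol}_{\mathrm{ren}}\le\tfrac{4\pi^{2}}{3}$; and because the metrics are of Petrov type $D$ (ambiKähler), the Weyl energy $\int_{M}|W_{g_p}|^{2}\,dV_{g_p}$ is an explicit algebraic function of the roots of $P$, which one checks stays bounded --- indeed converges --- as $p\to\partial\Omega$, so $\mathrm{Vol}_{\mathrm{ren}}(g_p)$ is uniformly bounded on $\Omega$. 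Equivalently, one can compute $\mathrm{Vol}_{\mathrm{ren}}(g_p)$ directly from the coordinate expression and see that it extends continuously to $\overline{\Omega}$.

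I expect the main obstacle to be the first step: pinning down $\Omega$ so that $g_p$ is genuinely smooth on $\mathbb{R}^{4}$ with a bona fide conformal infinity --- solving simultaneously all the smoothness-at-the-axes period conditions while excluding conical and orbifold points --- together with the uniformity in the degeneration analysis, i.e.\ upgrading the obvious pointwise limits to the strong convergence asserted with the precise cusp models. The renormalised volume bound should be comparatively routine once the explicit form of $|W|^{2}$ is available.
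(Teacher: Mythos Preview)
Your domain setup and cusp mechanism do not match the one that actually yields the theorem, and this is a genuine gap rather than a cosmetic difference. In the paper's construction the $x$-interval is bounded by a \emph{single} simple root $x_0=-1$ of $P$ and the $y$-interval by a \emph{single} simple root $y_0=0$ of $Q$, with the diagonal $\{x=y\}$ as the other boundary (conformal infinity); this is what forces the topology to be $\mathbb{R}^4$ (disk $\times$ disk). Your proposal places $x$ between two consecutive real roots $x_1<x_2$ of $P$, which makes the $(x,\theta_1)$-factor a $2$-sphere rather than a disk and produces $S^2\times\mathbb{R}^2$-type topology, not $\mathbb{R}^4$; it also makes your stated range ``$y$ lies below $x_1$'' incompatible with reaching the conformal infinity $\{x=y\}$. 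More importantly, the cusps in the theorem do not arise from a boundary simple root $x_i$ turning into a double root. The parameter region $\Omega$ is precisely where the remaining roots $x_\pm$ of $P$ and $y_\pm$ of $Q$ are \emph{complex conjugate pairs}; as one approaches $\partial\Omega$ these complex pairs coalesce into a real double root $x_*$ (resp.\ $y_*$) lying in the \emph{interior} of the coordinate interval, and it is along $\{x=x_*\}$ (resp.\ $\{y=y_*\}$) that the separating codimension-$2$ cusp forms, splitting the manifold into two pieces. The two-cusp case is the corner $(\mu,\nu)=(16,8)$ where both $P$ and $Q$ acquire interior double roots simultaneously --- not two degenerate roots of $P$ as you wrote.

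So to repair the argument you must replace your ``two real roots of $P$'' picture by: one fixed simple root of $P$ at $-1$, one fixed simple root of $Q$ at $0$, complex conjugate secondary roots of each, and the degeneration being these complex pairs becoming real inside $(-1,0)$; the regularity conditions then reduce to fixing the periods of $\varphi,\psi$ from $P'(-1)$ and $Q'(0)$ only (no compatibility constraint between two $P$-axes). Your renormalised-volume paragraph via Chern--Gauss--Bonnet and a uniform bound on $\int |W|^2$ is correct and is exactly what the paper does.
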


An important question left open is the following one.
\begin{quest}
    Does there exist a continuous family of Poincaré-Einstein metrics forming cusps separating the manifold into a complete \emph{finite volume} piece  and another complete Poincaré-Einstein metric?
\end{quest}

\begin{rem}\label{remark all cusp rech infinity}
    Unfortunately, this is impossible in our family of metrics and there is little hope to find such a family of metrics explicitly given in coordinates. Indeed, in our case, one limit of such a degeneration has to be an Einstein metric with negative Ricci curvature and with at least one Killing vector field with finite length, which is impossible by Bochner's formula; see \cite{MR758664} 
    for instance.
\end{rem}

\subsubsection*{Degeneration in the Carter-Pleba\'nski family of metrics}

The limits ``without acceleration'' of the PD metrics consitute the \textit{Carter-Pleba\'nski} family of metrics. In Section \ref{section degeneration CP}, we exhibit a subfamily of smooth Poincaré-Einstein metrics with topology $\mathbb{CP}^2\backslash D^4$ forming cusp in some limits, and discuss how other topologies may be reached.

\subsubsection*{Degeneration in the full Pleba\'nski-Demia\'nski family of metrics}

In the full family of PD metrics, we also obtain cusps as in Theorem \ref{thm ads C metrics intro} which are this time ``twisted'' as in \eqref{separating cusp}. We additionally find families of smooth Poincaré-Einstein metrics on $\mathbb{R}^4$ where \textit{only} the conformal infinity degenerates in some limit.

\begin{thm}[Section \ref{section degeneration conformal infinity}]\label{degeneration infinity}
    There exists a smooth family of Poincaré-Einstein metrics on $\mathbb{R}^4$ whose conformal infinity approaches one of the following behaviors in some limit: for $a,b>0$
    \begin{itemize}
        \item A conical (edge) singularity: $ dr^2+ a^2r^2d\theta_1^2+b^2d\theta_2^2 $ on $(r,\theta_1,\theta_2)\in [0,1]\times [0,2\pi]\times [0,2\pi]$,
        \item A naked singularity: $ dr^2+ a^2r^6d\theta_1^2+b^2d\theta_2^2 $ on $(r,\theta_1,\theta_2)\in [0,1]\times [0,2\pi]\times [0,2\pi]$, or
        \item A cusp end: $ dr^2+ a^2e^{-4r}d\theta_1^2+b^2d\theta_2^2 $ on $(r,\theta_1,\theta_2)\in [0,+\infty]\times [0,2\pi]\times [0,2\pi]$.
    \end{itemize}
    While approaching these behaviors at conformal infinity, the metrics converge smoothly in the bulk metric in the pointed Cheeger-Gromov sense. These examples have uniformly bounded renormalized volume.
\end{thm}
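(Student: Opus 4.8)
The plan is to work inside the explicit Pleba\'nski-Demia\'nski ansatz \eqref{family PD with a}, which by hypothesis depends only on the two quartic polynomials $P$ and $Q$ subject to the algebraic relations imposed by the Einstein condition $\Ric(g)=\Lambda g$ with $\Lambda<0$. First I would isolate a one- or two-parameter subfamily of polynomials $(P_s,Q_s)$ such that for $s$ in an open interval the metric $g_s$ is smooth and globally defined on $\mathbb{R}^4$: this amounts to checking that $Q_s$ has a simple positive root $r_0(s)$ at which the relevant circle fiber degenerates smoothly (the usual ``bolt/NUT'' period-matching condition fixing $\theta$-periods), that $P_s$ keeps a fixed sign on the range of the second variable, and that the two angular Killing fields generate a free torus action away from the axes so the topology is indeed $\mathbb{R}^4$ (no extra $2$-cycles). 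The conformal infinity is then read off by the standard procedure: write $g_s = \rho^{-2}(d\rho^2 + h_s(\rho))$ near the conformal boundary and set $h_s := h_s(0)$; in these coordinates $h_s$ is an explicit metric on the boundary $3$-manifold of the form $dr^2 + f_s(r)\,d\theta_1^2 + k_s(r)\,d\theta_2^2$ with $f_s,k_s$ built from $P_s,Q_s$.

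Next I would choose the degeneration. Let a parameter $s\to s_*$ so that a root of $P_s$ (equivalently a zero of $f_s$) moves to the endpoint of the $r$-interval, or so that two roots collide, or so that the root escapes to $r=+\infty$; these three mechanisms produce respectively the conical/edge model $dr^2+a^2r^2\,d\theta_1^2+b^2\,d\theta_2^2$ (a simple root surviving but with the wrong angular period, giving a cone angle $\neq 2\pi$), the ``naked'' model $dr^2+a^2r^6\,d\theta_1^2+b^2\,d\theta_2^2$ (a higher-order vanishing, obtained by a double or triple collision of roots so $f_s(r)\sim r^6$), and the cusp model $dr^2+a^2e^{-4r}\,d\theta_1^2+b^2\,d\theta_2^2$ (the boundary root pushed to infinity, so after the substitution putting the boundary metric in arclength gauge the coefficient decays exponentially, the exponent $4$ being dictated by the normalization of $\Lambda$ and the leading coefficient of $Q$). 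For each case one computes the limiting $f_{s_*},k_{s_*}$ explicitly and reads off $a,b>0$; the remaining freedom in $(P,Q)$ is what makes $a,b$ genuinely free. One must also verify $k_s\to b^2>0$ stays bounded away from $0$ and $\infty$, so the $\theta_2$-circle neither collapses nor blows up — this is the point distinguishing a codimension-$2$ degeneration of the boundary from a full collapse.

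Then I would prove the bulk convergence. Because $g_s$ is given in closed form, convergence in $C^\infty_{loc}$ on the region $r<r_0(s)-\epsilon$ (away from the degenerating axis) is immediate from smooth dependence of $(P_s,Q_s)$ on $s$; to get pointed Cheeger-Gromov convergence one picks basepoints on the fixed bolt $\{r=r_0(s)\}$ (whose geometry varies smoothly) and uses that the metrics have a uniform injectivity radius lower bound and uniform curvature bounds on bounded distance balls — again checkable by hand from the formula, since curvature is a rational function of $P_s,Q_s$ and their derivatives. The uniform bound on renormalized volume follows from the formula of \cite{gra}: for these metrics the renormalized volume is an explicit rational expression in the coefficients of $P$ and $Q$ (essentially a residue at the conformal boundary), and along the chosen path $s\to s_*$ this expression stays finite because no coefficient blows up and no denominator vanishes; one checks this directly.

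The main obstacle I expect is the simultaneous bookkeeping at $s=s_*$: ensuring that while the \emph{conformal infinity} degenerates, the \emph{interior} genuinely does \emph{not} — i.e. the degenerating axis of the boundary metric does not propagate inward to create an interior cusp, orbifold point, or collapse. Concretely this requires showing that the ``bad'' root of $P_s$ interacts only with the $\rho\to 0$ region and that the interior period/smoothness conditions at $r=r_0(s)$ remain uniformly satisfied as $s\to s_*$; decoupling these two behaviors is what the specific choice of subfamily must be engineered to achieve, and verifying the decoupling — rather than any individual computation — is the crux of the argument.
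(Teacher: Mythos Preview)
Your general framework---work in the Pleba\'nski--Demia\'nski ansatz, arrange root configurations of $P$ and $Q$, read off the boundary metric---is the right one, but the specific mechanism you describe misses the key idea of the paper and in places is incorrect. The crucial observation is that in the $a=1$ family one has $Q(y)=P(y)+y^4-1$, so a root of $P$ at $x=\pm1$ is automatically a root of $Q$ at the same point, and moreover the twisting factor $1-x^2y^2$ in \eqref{family PD with a} vanishes precisely there. In the $(x,y)$-domain (with conformal infinity at $\{x=y\}$) the point $(1,1)$ is a \emph{corner}: it lies on the conformal boundary and nowhere in the interior. Thus a multiple root of $P$ placed exactly at $1$ affects only the single boundary point $x=y=1$ and does not propagate as a line through the bulk---this is the decoupling you flag as the main obstacle, and it is not ``engineered'' by a clever subfamily but forced by the algebraic identity $Q=P+y^4-1$ together with the vanishing of $1-x^2y^2$.

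Consequently your proposed mechanisms for the three behaviors are off. The conical, naked, and cusp models arise respectively when $1$ is a simple, double, or \emph{triple} root of $P$ (always a simple root of $Q$); the powers $r^2$, $r^6$, $e^{-4r}$ fall out of a direct expansion of the boundary metric \eqref{conformal boundary metric} near $x=1$, with the factor $(1-x^4)$ contributing one order of vanishing (see Appendix~\ref{section appendix possible asymptotics boundary}). In particular the cusp is \emph{not} produced by a root escaping to infinity, nor is the exponent $4$ determined by $\Lambda$ or the leading coefficient of $Q$---it comes from the triple root of $P$ at the fixed point $1$. Your proposal to obtain $r^6$ from a ``double or triple collision'' conflates the naked and cusp cases. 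Finally, the renormalized volume bound is not obtained from an explicit residue formula but from finiteness of $\|W\|_{L^2}$ (checkable directly from the curvature expression in Section~\ref{section PD}) together with the Chern--Gauss--Bonnet identity of \cite{MR1825268}.
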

\noindent These degenerations can occur in various limits that we describe in Section \ref{section degeneration conformal infinity} and Section \ref{section appendix possible asymptotics boundary}.

\vspace{-5pt}

\subsection*{Acknowledgments}
Carlos A. Alvarado and Daniel A. Santiago are grateful to the MIT Mathematics Department and Undergraduate Research Opportunities Program for their funding and support. They thank Tomasz Mrowka for his supervision and support. Tristan Ozuch would like to thank University of Münster for the amazing visiting conditions as a Young Research fellow at the Cluster of excellence, and Hans-Joachim Hein for stimulating discussions on the topic of this article during this visit.

\vspace{-5pt}

\section{The families of metrics considered}

\subsection{Pleba\'nski-Demia\'nski family of metrics}\label{section PD}

A ``Euclideanized'' Pleba\'nski-Demia\'nski (PD) metric has the following form 

\vspace{-5pt}

\small
\begin{equation}
\begin{aligned}
    g_{PD} = \frac{1}{(x-y)^2}&\left[-\frac{Q(y)}{1-a^2x^2y^2}(d\psi-ax^2d\varphi)^2 - \frac{1-a^2x^2y^2}{Q(y)}dy^2+ \frac{P(x)}{1-a^2x^2y^2}(d\varphi-ay^2d\psi)^2+\frac{1-a^2x^2y^2}{P(x)}dx^2\right] 
\end{aligned}\label{family PD with a}
\end{equation} 
\normalsize where $Q(y)$ and $P(x)$ are polynomials of degree $4$ which can be chosen depending on the value of $a\in\mathbb{R}$, physically understood as a \textit{rotation parameter}, so that $g_{PD}$ is an Einstein metric with $\Ric_{g_{PD}} = -3g_{PD}$ following the (Riemannian version of the) computations in \cite{PLEBANSKI197698}. Up to rescaling, we can assume $a \in \{0,1\}$. 
\\

Let us first consider the larger family with $a =1$ from which the other ones can be obtained from various limiting procedures. The Einstein condition \eqref{Einstein condition} with $\Lambda=-3$ is equivalent to $P$ and $Q$ having the form
\begin{equation}
    \begin{aligned}
    P(x) &= bx^4 + cx^3 + dx^2 + ex + b + 1 \text{ and }\\
    Q(y) &= (b+1)y^4+cy^3+dy^2+ey+b,
    \end{aligned}\label{polynoms general PD}
\end{equation}
for $b,c,d,e\in\mathbb{R}$ where we note the identity $Q(y) = P(y)+y^4-1$. The \textit{local} metric \eqref{family PD with a} is then Einstein \textit{and Riemannian} on ranges depending on roots of $P$ and $Q$. When ``closing-up'' at roots of $P$ and $Q$ it may have codimension $2$ cone-edge singularities (which we will avoid) or cusp ends which are discussed in Appendix \ref{section appendix possible asymptotics}. These metrics are moreover Poincaré-Einstein since they are conformal to a metric with boundary: the boundary is given by $\{x=y\}$ and the conformal factor is $\frac{1}{(x-y)^2}$. The \textit{conformal infinity} of these metrics is the conformal class of the metric induced on $\{x=y\}$ by $(x-y)^2g_{PD}$. We will see in different instances, especially in Section \ref{section degeneration conformal infinity} that these conformal infinities may degenerate. The possible degenerations of the conformal infinity are collected in the Appendix \ref{section appendix possible asymptotics boundary}. 

Without loss of generality, we can write $c = k_+ + k_-, e = k_+-k_-$ in \eqref{polynoms general PD}, in which the set of eigenvalues of the $\pm$-selfdual part of Weyl curvature $W_{g_{PD}}$ is proportional to $ \frac{k_\pm}{(1 \pm x y)^3} (2,-1,-1) $. The pointwise norm of the Riemannian tensor of $g_{PD}$ is given by

\vspace{-8pt}

$$||\Rm_{g_{PD}}||^2 = 24 + 24(x - y)^6\left( \frac{k_+^2}{(1 + x y)^6} + \frac{k_-^2}{(1-xy)^6}\right).$$

\vspace{-5pt}

The volume element in these coordinates is $ \frac{-1 + x^2 y^2}{(x - y)^4}dxdyd\varphi d\psi $ and one checks that $ \|W_{g_{PD}}\|_{L^2(g_{PD})} $ is finite for the domains we consider, hence, by \cite{MR1825268}, the renormalized volume is controlled for our examples.

\subsection{Non rotating limit: AdS C-metrics}\label{section AdS C metric}
For this section, we 
%use the notation and the
%study the Riemannian %version of the metrics in 
will follow
\cite{PhysRevD.92.044058}  and will adopt their notation. Our study and goals are purely geometric and differ from theirs.
The AdS C-metrics are obtained from the general Pleba\'nski-Demia\'nski family \eqref{family PD with a} by taking the \textit{non-rotating limit} $a \to 0$. These metrics are the Riemannian analogues of the metrics considered in \cite{PhysRevD.91.064014}, and have the form

\vspace{-5pt}

\begin{equation}
    g_{C} = \frac{1}{(x-y)^2} \left[ -Q(y)d\psi^2-\frac{dy^2}{Q(y)}+\frac{dx^2}{P(x)}+P(x)d\varphi^2 \right]\label{C-metric}
\end{equation}
where we will assume that $Q$ and $P$ are parametrized by two variables $\mu,\nu$ as 
\begin{equation}
    \begin{aligned}
    P(x) &= (1+x)\left(1+\nu x+\mu x^2\right),\text{ and }\\
    Q(y) &= y\left[1+\nu+(\mu+\nu)y+\mu y^2\right],
    \end{aligned}\label{polynoms general C-metric}
\end{equation}
This ensures Einstein condition \eqref{Einstein condition} is satisfied. The pointwise norm of the Riemannian tensor of $g_C$ is given by
$$\|\Rm_{g_C}\|_{g_C}^2 = 24+12(x-y)^6\mu^2,$$
and more precisely, one has $ \Ric(g_C)=-3g_C $ and the eigenvalues of both the selfdual and anti-selfdual parts of $W_{g_C}$ are equal to $\frac{\mu}{4} (y-x)^3(2,-1,-1)$ which as expected go to zero as $x\to y$. Moreover, when $\mu=0$, the metric is \textit{locally hyperbolic}. A direct computation ensures again that $\|W_{g_C}\|_{L^2(g_C)}^2$ is bounded. In particular, from \cite{MR1825268}, these examples have bounded \textit{renormalized volume}.

%$$Kr_C = 4(-2P(x)+2Q(y)+(x-y)(P'(x)+Q'(y)))^2+(2P(x)-2Q(y)+2(y-x)G'(x)+(x-y)^2G''(x))^2$$
%$$+(-2 P(x)+2Q(y)+(x-y)(2Q'(y)+(x-y)Q''(y)))^2$$

\subsection{Non accelerating limit: Carter-Pleba\'nski metrics}

The Carter-Pleba\'nski family of metrics is a special limit of the Pleba\'nski-Demia\'nski family of metrics \eqref{family PD with a} after a change of coordinates. To do this, start from \eqref{family PD with a} in the coordinates of \cite{1981PhRvD..24.1478L} and perform a rescaling by $b>0$ (acceleration parameter) of coordinates as in \cite[Section 2.2]{griffiths2006new}, which yields the following metric:
\begin{equation}
    g_{PD} = \frac{1}{(1-bpq)^2} \left[\frac{p^2-q^2}{\mathcal{P}_b(p)}dp^2 + \frac{q^2-p^2}{\mathcal{Q}_b(q)}dq^2 + \frac{\mathcal{P}_b(p)}{p^2-q^2}\left(d\tau+q^2d\sigma\right)^2 + \frac{\mathcal{Q}_b(q)}{q^2-p^2}\left(d\tau+p^2d\sigma\right)^2\right]\label{Family PD other coord}
\end{equation}
 for polynomials $\mathcal{P}_b$ and $\mathcal{Q}_b$ depending on $b>0$ chosen to satisfy \eqref{Einstein condition} with $\Lambda = -3$.  Taking the ``no acceleration limit'' $b \to 0$ as in \cite[Section 5]{griffiths2006new}, we obtain from \eqref{Family PD other coord} the metric
\begin{equation}
    g_{CP}:= \frac{p^2-q^2}{\mathcal{P}(p)}dp^2 + \frac{q^2-p^2}{\mathcal{Q}(q)}dq^2 + \frac{\mathcal{P}(p)}{p^2-q^2}\left(d\tau+q^2d\sigma\right)^2 + \frac{\mathcal{Q}(q)}{q^2-p^2}\left(d\tau+p^2d\sigma\right)^2,\label{Family CP}
\end{equation}
where the limiting polynomials $\mathcal{P}$ and $\mathcal{Q}$ are of the form:
\begin{equation} 
\begin{aligned}
    \mathcal{P}(p) &= p^4 + E^2p^2 -2Np + \alpha  \text{ and }\\
    \mathcal{Q}(q) &= q^4 + E^2q^2 -2Mq + \alpha
\end{aligned}
\end{equation}
following the notations of \cite{Martelli_2013} for some real numbers $E$, $M$, $N$ and $\alpha$.

We will consider intervals where $\mathcal{P}(p) \leqslant 0$ and $\mathcal{Q}(q)\geqslant 0$. This time, the range in $p$ will be compact of the form $[p_-,p_+]$ for $p_\pm$ roots of $\mathcal{P}$ and the range in $q$ will be of the form $[q_+,+\infty)$ for $q_+$ root of $\mathcal{Q}$.

This metric is Poincaré-Einstein and as $q \to + \infty$ (the infinity in these coordinates), the metric looks like
\begin{equation}
    g_{CP} \approx \frac{dq^2}{q^2} + q^2\left( -\frac{dp^2}{\mathcal{P}(p)}-\mathcal{P}(p)d\sigma^2+(d\tau + p^2d\sigma)^2\right)
\end{equation}
so the metric at conformal infinity is $-\frac{dp^2}{\mathcal{P}(p)}-\mathcal{P}(p)d\sigma^2+(d\tau + p^2d\sigma)^2$.

%\subsection{Nonexpanding limit: AdS B-metrics with rotation}

%Tentative metric:
%\begin{equation}
    %g=(\gamma^2-p^2)\left(\frac{dp^2}{\mathcal{P}(p)}+\mathcal{Q}(q) d\varphi^2 + \frac{dq^2}{\mathcal{Q}(q)}\right) + \frac{\mathcal{P}(p)}{\gamma^2-p^2}(d\psi - 2 \gamma q d\varphi)^2
%\end{equation}
%with $ \mathcal{Q}(q) = \epsilon_0 -\epsilon_2 q^2 $ and $ \mathcal{P}(p) = (-\epsilon_2\gamma^2 - 3 \gamma^4)p^2-2np - (\epsilon_2 +6\gamma^2) -p^4 $ with $\epsilon_i\in\{\pm 1\}$

%\todo[inline]{To describe}

%\begin{rem}
   % Another limit exists when the ``acceleration'' is set to zero in a suitable system of coordinates. This limit is known as \textit{Carter-Pleba\'nski's family}, but will not be discussed further in the present paper.
%\end{rem}

\section{Degenerations of AdS C-metrics}\label{section degeneration ads c metric}

In this section, we study a specific $2$-dimensional family of AdS C-metrics on $\mathbb{R}^4$ forming one or two cusps in different limits. The cusps forming here effectively separate the manifold into two or three Poincaré-Einstein metrics with cusps ends in their bulk and their conformal infinities. We prove Theorem \ref{thm ads C metrics intro}.

As in Section \ref{section AdS C metric} we consider the metric \eqref{C-metric}
where $Q(y) = y\left[1+\nu+(\mu+\nu)y+\mu y^2\right]$ and $P(x) = (1+x)\left(1+\nu x+\mu x^2\right)$. The roots of $P$ and $Q$ respectively are as follows  \begin{equation} \label{ADSRoots}
\begin{aligned}
x_0&= -1,  &x_{\pm}= \frac{-\nu \pm \sqrt{\nu^2-4\mu}}{2\mu}, \text{ and }\\
y_0&= 0,  &y_{\pm}=  \frac{-(\mu+\nu) \pm \sqrt{(\mu+\nu)^2-4\mu(1+\nu)}}{2\mu}.
\end{aligned}
\end{equation}

%\todo[inline]{We need to give the range of coordinates $-1\leqslant x<y\leqslant0$ and justify briefly that we have the right signs for the values of $\mu$ and $\nu$ we choose.}

%\todo[inline]{We need to give the periods of $\varphi$, $\psi$ ensuring the regularity of the metric. Use Proposition \ref{regularity criterion} in the appendix.}

\noindent  In order to approach metrics with cusp ends in this family by smooth metrics, we consider the case when $x_{\pm}, y_{\pm}$ are complex conjugate roots which we will let approach a real double root -- leading to a cusp degeneration by Section \ref{section approaching cusp}. In the $(\mu,\nu)$ plane, this condition means that $(\mu,\nu)$ lies in the region bounded by the curves $\nu = 2 \sqrt{\mu}$ and $\nu = \mu - 2 \sqrt{\mu}$.

We then consider $-1<x<y<0$ where the conformal infinity is at $\{x=y\}$, see Figure \ref{range x y c metric}. For the metric to be smooth, we require that $\frac{1-\nu + \mu}{2}\varphi$ and $ \frac{1+\nu}{2}\psi$ be $2\pi$-periodic, see Proposition \ref{regularity criterion}. We further impose that $\mu>\max(\nu/2,-\nu)$. This corresponds to forcing the real part of $x_\pm$ and $y_\pm$ to be in $(-1,0)$, this way the double root degeneration (when the imaginary part of the roots tends to zero) happens where the metric is defined and is geometrically meaningful. 
We end up with the region $D4$ in \cite{PhysRevD.92.044058} shaded in Figure \ref{figure admissible mu nu parameters} in the $\mu,\nu$ plane bounded by the curves $\nu = 2 \sqrt{\mu}$, $\nu = \mu - 2 \sqrt{\mu}$, $\nu = 2\mu$ and $\nu = -\mu$.

%\todo{recall the range of possible double roots in $P$ and $Q$} Will add below 

%$x_{\pm} = \frac{-1}{4}, y_{\pm} = \frac{-3}{4}$. 
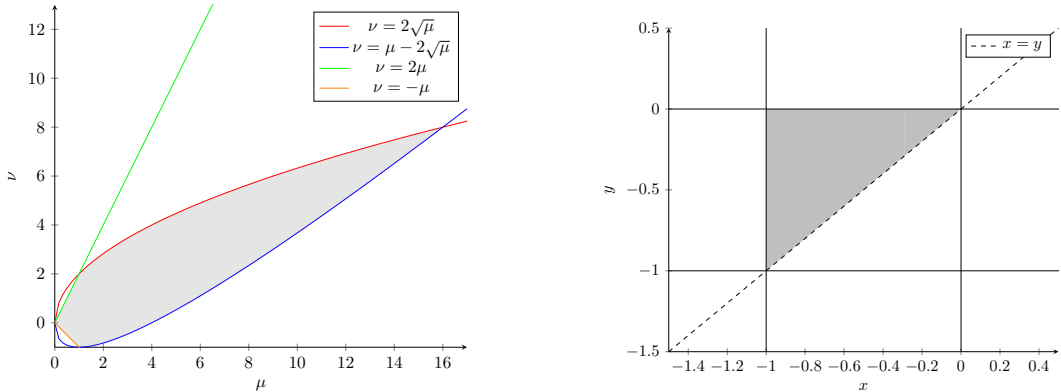
\begin{figure}[H]
\centering
\begin{subfigure}[t]{0.45\textwidth}
\begin{center}
\resizebox{0.8\linewidth}{!}{
\begin{tikzpicture}

\begin{axis}[
    axis lines = left,
    xlabel = \(\mu\),
    ylabel = {\(\nu\)},
    ymin = -1,
    ymax = 13,
    xmin = 0,
    xmax = 17,
]
%Below the red parabola is defined
\addplot [
    name path = g,
    domain=0:17, 
    samples=100, 
    color=red,
]
{2*(x^0.5)};
\addlegendentry{\(\nu=2\sqrt{\mu}\)}
%Here the blue parabola is defined
\addplot [
    name path=f, 
    domain=0:17, 
    samples=100, 
    color=blue,
    ]
    {x-2*(x^0.5)}; % x-(2*(x**.5)
\addlegendentry{\(\nu=\mu-2\sqrt{\mu}\)}

\addplot [
    name path=h,
    domain=0:17, 
    samples=100, 
    color=green
    ]
    {2*x};
\addlegendentry{\(\nu=2\mu\)}
\addplot [
    name path = i,
    domain=0:17, 
    samples=100, 
    color=orange
    ]
    {-x};
\addlegendentry{\(\nu=-\mu\)}
\addplot [
        thick,
        color=gray,
        fill=gray, 
        fill opacity=0.2
    ]
    fill between[
        of=h and i,
        soft clip ={domain=0:1},
    ];

\addplot [
        thick,
        color=gray,
        fill=gray, 
        fill opacity=0.2
    ]
    fill between[
        of=f and g,
        soft clip ={domain=1:16},
    ];

\end{axis}

\end{tikzpicture}}
\end{center}

\vspace{-5pt}

\caption{Admissible $(\mu,\nu)$ parameters (shaded). }\label{figure admissible mu nu parameters}
\end{subfigure}
\begin{subfigure}[t]{0.45\textwidth}
\begin{center}
\resizebox{0.8\linewidth}{!}{
\begin{tikzpicture}

\begin{axis}[
    axis lines = left,
    xlabel = \(x\),
    ylabel = {\(y\)},
    ymin = -1.5,
    ymax = 0.5,
    xmin = -1.5,
    xmax = 0.5,
]
\addplot [
    name path=g,
    domain=-2:1, 
    samples=100, 
    color=black,
    dashed
]
{x};
\addlegendentry{\(x=y\)}
\addplot [
    name path =f,
    domain=-2:1, 
    samples=100, 
    color=black,
]
{0};
\addplot [
    domain=-2:1, 
    samples=100, 
    color=black,
]
{-1};

\draw (0,-2 |- current axis.south) -- (0,1 |- current axis.north);
\draw (-1, -2  |- current axis.south) -- (-1,1 |- current axis.north);
\addplot [
        thick,
        color=gray,
        fill=gray, 
        fill opacity=0.5
    ]
    fill between[
        of=f and g,
        soft clip ={domain=-1:-0.289},
    ];
\addplot [
        thick,
        color=gray,
        fill=gray, 
        fill opacity=0.5
    ]
    fill between[
        of=f and g,
        soft clip ={domain=-0.289:0},
    ];
\end{axis}

\end{tikzpicture}
}
\end{center}

\vspace{-5pt}

\caption{Range of coordinates $(x,y)$ (shaded), the range for $\varphi$, $\psi$ is a torus determined by Proposition \ref{regularity criterion}}\label{range x y c metric}
\end{subfigure}

\vspace{-5pt}

\caption{Parameter ranges considered. The dashed $\{x=y\}$ is the conformal infinity.}\label{figure C-metric}

\end{figure}

\vspace{-5pt}

\begin{rem}\label{remark C-metric mu nu=0}
    In the limit $(\mu, \nu) \to 0$ from our region shaded in Figure \ref{figure admissible mu nu parameters}, our metrics converge smoothly to the hyperbolic $4$-space. Indeed, the metric is already locally hyperbolic by our curvature computations and the change of variables 
$x = -\sin^2\left((u-\pi)/2\right), $
at the conformal infinity $\{x=y\}$, the restriction of the metric $(x-y)^2g_{CLT}$ with $\mu=\nu=0$ takes the form

\vspace{-10pt}

$$du^2 + \cos \left(\frac{u-\pi}{2} \right)^2d\varphi^2+ \sin \left(\frac{u-\pi}{2} \right)^2d\psi^2. $$

\vspace{-5pt}

Thus we recover the metric of the round $3$-sphere in Hopf's coordinates since $\varphi$ and $\psi$ are $4\pi$-periodic. This in particular ensures that the topology we consider is $\mathbb{R}^4$.
\end{rem}

From (\ref{ADSRoots}) we see that, for $(\mu,\nu)$ in the shaded region in Figure \ref{figure admissible mu nu parameters}, if one of $P,Q$ has a double root, then $(\mu,\nu)$ lies on at least one of the boundary curves $\nu = 2 \sqrt{\mu}$ or $\nu = \mu -2\sqrt{\mu}$ respectively in blue and red in Figure \ref{figure C-metric}, see the first two columns of Figure \ref{configurations of double roots} for the associated polynomials and geometric representation. The intersection of these curves, $(\mu,\nu) = (16,8)$, is the unique case when $P$ and $Q$ have double roots at $x = \frac{-1}{4}$ and $y=\frac{-3}{4})$ as described in Figures \ref{two cusps} and \ref{two double roots Cmetric}  respectively leading to two cusps dividing the manifold in three regions, while the point $(\mu,\nu)=(0,0)$ corresponds to hyperbolic $4$-space from Remark \ref{remark C-metric mu nu=0}. The possible double roots of $P$ and $Q$ respectively lie in the intervals $(-1,\frac{-1}{4}]$ and $ [\frac{-3}{4},0)$.
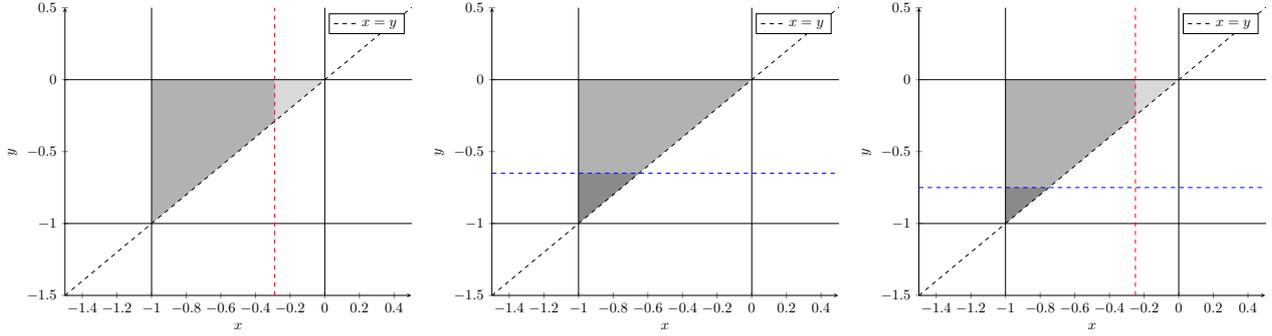
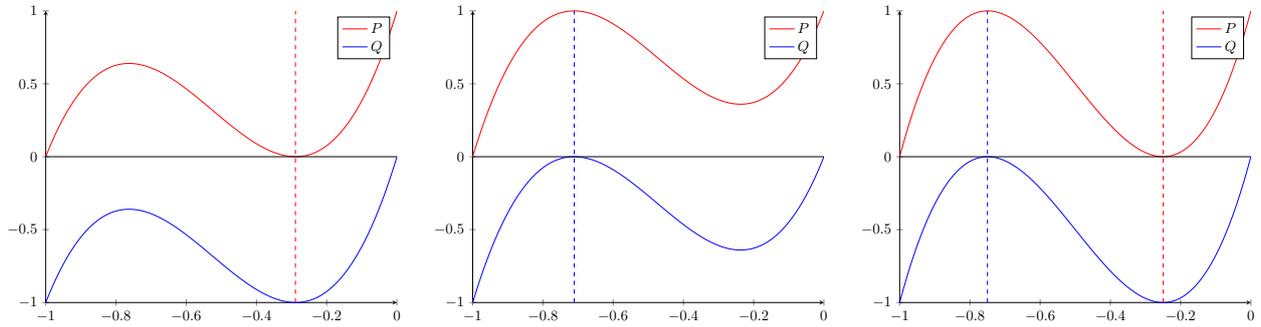
\begin{figure}[H]
\begin{subfigure}[t]{0.32\textwidth}
\begin{center}
\resizebox{\linewidth}{!}{
\begin{tikzpicture}

\begin{axis}[
    axis lines = left,
    xlabel = \(x\),
    ylabel = {\(y\)},
    ymin = -1.5,
    ymax = 0.5,
    xmin = -1.5,
    xmax = 0.5,
]
\addplot [
    name path=g,
    domain=-2:1, 
    samples=100, 
    color=black,
    dashed
]
{x};
\addlegendentry{\(x=y\)}
\addplot [
    name path =f,
    domain=-2:1, 
    samples=100, 
    color=black,
]
{0};
\addplot [
    domain=-2:1, 
    samples=100, 
    color=black,
]
{-1};

\draw (0,-2 |- current axis.south) -- (0,1 |- current axis.north);
\draw (-1, -2  |- current axis.south) -- (-1,1 |- current axis.north);
\draw[red, dashed] (-0.289, -2  |- current axis.south) -- (-0.289,1 |- current axis.north);

\addplot [
        thick,
        color=gray,
        fill=gray, 
        fill opacity=0.6
    ]
    fill between[
        of=f and g,
        soft clip ={domain=-1:-0.289},
    ];
\addplot [
        thick,
        color=gray,
        fill=gray, 
        fill opacity=0.3
    ]
    fill between[
        of=f and g,
        soft clip ={domain=-0.289:0},
    ];
\end{axis}

\end{tikzpicture}
}
\end{center}

\vspace{-10pt}

\caption{Double root (red) in P separating the manifold.}
\end{subfigure}
\begin{subfigure}[t]{0.32\textwidth}
\begin{center}
\resizebox{\linewidth}{!}{
\begin{tikzpicture}

\begin{axis}[
    axis lines = left,
    xlabel = \(x\),
    ylabel = {\(y\)},
    ymin = -1.5,
    ymax = 0.5,
    xmin = -1.5,
    xmax = 0.5,
]
\addplot [
    name path=g,
    domain=-2:1, 
    samples=100, 
    color=black,
    dashed
]
{x};
\addlegendentry{\(x=y\)}
\addplot [
    name path =f,
    domain=-2:1, 
    samples=100, 
    color=black,
]
{0};
\addplot [
    domain=-2:1, 
    samples=100, 
    color=black,
]
{-1};
\addplot [
    name path =t,
    domain=-2:1, 
    samples=100, 
    color=blue,
    dashed
]
{-0.65};

\draw (0,-2 |- current axis.south) -- (0,1 |- current axis.north);
\draw (-1, -2  |- current axis.south) -- (-1,1 |- current axis.north);

\addplot [
        thick,
        color=gray,
        fill=gray, 
        fill opacity=0.8
    ]
    fill between[
        of=g and t,
        soft clip ={domain=-1:-0.65},
    ];
\addplot [
        thick,
        color=gray,
        fill=gray, 
        fill opacity=0.6
    ]
    fill between[
        of=f and g,
        soft clip ={domain=-1:0},
    ];
\end{axis}

\end{tikzpicture}
}
\end{center}

\vspace{-10pt}

\caption{Double root (blue) in $Q$ separating the manifold.}
\end{subfigure}
\begin{subfigure}[t]{0.32\textwidth}
\begin{center}
\resizebox{\linewidth}{!}{
\begin{tikzpicture}

\begin{axis}[
    axis lines = left,
    xlabel = \(x\),
    ylabel = {\(y\)},
    ymin = -1.5,
    ymax = 0.5,
    xmin = -1.5,
    xmax = 0.5,
]
\addplot [
    name path=g,
    domain=-2:1, 
    samples=100, 
    color=black,
    dashed
]
{x};
\addlegendentry{\(x=y\)}
\addplot [
    name path =f,
    domain=-2:1, 
    samples=100, 
    color=black,
]
{0};
\addplot [
    domain=-2:1, 
    samples=100, 
    color=black,
]
{-1};

\addplot [
    name path =t,
    domain=-2:1, 
    samples=100, 
    color=blue,
    dashed
]
{-0.75};

\draw (0,-2 |- current axis.south) -- (0,1 |- current axis.north);
\draw (-1, -2  |- current axis.south) -- (-1,1 |- current axis.north);
\draw[red,dashed] (-0.25, -2  |- current axis.south) -- (-0.25,1 |- current axis.north);

\addplot [
        thick,
        color=gray,
        fill=gray, 
        fill opacity=0.6
    ]
    fill between[
        of=f and g,
        soft clip ={domain=-1:-0.25},
    ];
\addplot [
        thick,
        color=gray,
        fill=gray, 
        fill opacity=0.3
    ]
    fill between[
        of=f and g,
        soft clip ={domain=-0.25:0},
    ];
    
\addplot [
        thick,
        color=gray,
        fill=gray, 
        fill opacity=0.8
    ]
    fill between[
        of=g and t,
        soft clip ={domain=-1:-0.75},
    ];
\end{axis}

\end{tikzpicture}
}
\end{center}

\vspace{-10pt}

\caption{Double root in $P$ (red) and $Q$ (blue) separating in three.}\label{two cusps}
\end{subfigure}

\begin{subfigure}[t]{0.32\textwidth}
\centering
\resizebox{\linewidth}{!}{
\begin{tikzpicture}
\begin{axis}[
    axis lines = left,
    xlabel = \(\),
    ylabel = {\(\)},
    ymin = -1,
    ymax = 1,
    xmin = -1,
    xmax = 0,
]
%Below the red parabola is defined
\addplot [
    domain=-1:0, 
    samples=100, 
    color=red,
]
{(1+x)*(1+6.9282032*x+12*x^2)};
\addlegendentry{\(P\)}
%Here the blue parabola is defined
\addplot [
    domain=-1:0, 
    samples=100, 
    color=blue,
    ]
    {(1+x)*(1+6.9282032*x+12*x^2)-1};
\addlegendentry{\(Q\)}
\addplot [
    domain=-1:0, 
    samples=100, 
    color=black,
    ]
    {0};
\draw[red, dashed] (-0.289, -1) -- (-0.289,1);

\end{axis}
\end{tikzpicture}}
\caption{Double root only in P, \\
$\nu = 2\sqrt{\mu}$.}
\end{subfigure}
\begin{subfigure}[t]{0.32\textwidth}
\centering
\resizebox{\linewidth}{!}{
\begin{tikzpicture}
\begin{axis}[
    axis lines = left,
    xlabel = \(\),
    ylabel = {\(\)},
    ymin = -1,
    ymax = 1,
    xmin = -1,
    xmax = 0,
]
%Below the red parabola is defined
\addplot [
    domain=-1:0, 
    samples=100, 
    color=red,
]
{(1+x)*(1+5.07179676972*x+12*x^2)};
\addlegendentry{\(P\)}
%Here the blue parabola is defined
\addplot [
    domain=-1:0, 
    samples=100, 
    color=blue,
    ]
    {(1+x)*(1+5.07179676972*x+12*x^2)-1};
\addlegendentry{\(Q\)}
\addplot [
    domain=-1:0, 
    samples=100, 
    color=black,
    ]
    {0};

\draw[blue, dashed] (-0.711, -1) -- (-0.711,1);
\end{axis}
\end{tikzpicture}}\caption{Double root only in Q, \\
$\nu = \mu-2\sqrt{\mu}$.}\label{two double roots}
\end{subfigure}
\begin{subfigure}[t]{0.32\textwidth}
\centering
\resizebox{\linewidth}{!}{
\begin{tikzpicture}
\begin{axis}[
    axis lines = left,
    xlabel = \(\),
    ylabel = {\(\)},
    ymin = -1,
    ymax = 1,
    xmin = -1,
    xmax = 0,
]
%Below the red parabola is defined
\addplot [
    domain=-1:0, 
    samples=100, 
    color=red,
]
{(1+x)*(1+8*x+16*x^2)};
\addlegendentry{\(P\)}
%Here the blue parabola is defined
\addplot [
    domain=-1:0, 
    samples=100, 
    color=blue,
    ]
    {(1+x)*(1+8*x+16*x^2)-1};
\addlegendentry{\(Q\)}
\addplot [
    domain=-1:0, 
    samples=100, 
    color=black,
    ]
    {0};

\draw[red, dashed] (-0.25, -1) -- (-0.25,1);
\draw[blue, dashed] (-0.75, -1) -- (-0.75,1);
\end{axis}
\end{tikzpicture}}
\caption{Double root in both P and Q, \\$(\mu,\nu) = (16,8)$.}\label{two double roots Cmetric}
\end{subfigure}

\vspace{-5pt}

\caption{Different configurations of double roots.}\label{configurations of double roots}
\end{figure}

\section{Degenerations in Carter-Pleba\'nski family of metrics}\label{section degeneration CP}

In this section, we indicate how to find families of metrics forming cusps with different topologies. We take the simplest example here on $\mathbb{CP}^2\backslash D^4$ with conformal infinity $\mathbb{S}^3$. We follow \cite[Sections 2.1, 2.2 and 2.3]{Martelli_2013} for our regularity conditions: we impose $\tau$ and $\sigma$ to be as \cite[Sections 2.13 and  2.17]{Martelli_2013}. This also requires $N=M$, which is equivalent to the metric being \textit{self-dual}, and forcing $\mathcal{P}=\mathcal{Q}$. 

We will moreover parametrize our polynomial by the roots and looking for a metric with a cusp, we will consider a polynomial with a double root: for $p_3,p_4,p_0\in \mathbb{R}$ (following notations of \cite{Martelli_2013}):
\begin{equation}
    \mathcal{P}(p) = (p-p_3)(p-p_4)(p-p_0)^2.\label{polynomial CP}
\end{equation}
We will then consider the range $(p,q)\in [p_3,p_4]\times [p_4,+\infty]$, where the associated metric is indeed Riemannian.

\begin{figure}[h]
\begin{subfigure}[b]{0.48\textwidth}
\centering
\resizebox{0.8\linewidth}{!}{
\begin{tikzpicture}
\begin{axis}[
    axis lines = left,
    xlabel = \(p  \;\;\text{ or }\;\; q\),
    ylabel = {\(\)},
    ymin = -0.3,
    ymax = 0.5,
    xmin = -1.5,
    xmax = 2.5,
]
%Below the red parabola is defined
\addplot [
    domain=-2:2, 
    samples=100, 
    color=red,
]
{(x^2)*(x-1)*(x+1)};
\addlegendentry{\(\mathcal{P} = \mathcal{Q}\)}
%Here the blue parabola is defined
\addplot [
    domain=-1:1, 
    samples=100, 
    color=orange,
    line width=0.7mm
    ]
    {0};
\addlegendentry{\((p_3,p_4)\)}
\addplot [
    domain=1:3, 
    samples=100, 
    color=green,
    line width=0.7mm
    ]
    {0};
\addlegendentry{\((p_4,\infty)\)}
\addplot [
    domain=-2:-1, 
    samples=100, 
    color=black,
    ]
    {0};

\draw [color=red, dashed] (0,-2 |- current axis.south) -- (0,2 |- current axis.north);

\end{axis}
\end{tikzpicture}}
\caption{Example of double root in $\mathcal{P}$ with $N=M$. Intervals where $\mathcal{P}$ and $\mathcal{Q}$ are defined are highlighted.}\label{figure range polynomial carter}
\end{subfigure}
\begin{subfigure}[b]{0.48\textwidth}
\begin{center}
\resizebox{0.8\linewidth}{!}{
\begin{tikzpicture}

\begin{axis}[
    axis lines = left,
    xlabel = \(q\),
    ylabel = {\(p\)},
    ymin = -2,
    ymax = 2,
    xmin = 0.5,
    xmax = 3,
]
%Below the red parabola is defined
\addplot [
    name path=f,
    domain=-2:3, 
    samples=100, 
    color=black,
]
{-1};

\addplot [
    domain=-2:3, 
    samples=0, 
    color=black,
]
{-1};

\addplot [
    name path=g,
    domain=-2:3, 
    samples=100, 
    color=black,
    ]
    {1}; % x-(2*(x**.5)

\draw (1,-2 |- current axis.south) -- (1,2 |- current axis.north);

\draw (-1,-2 |- current axis.south) -- (-1,2 |- current axis.north);

\addplot [
    name path=dr,
    domain=-2:3, 
    samples=100, 
    color=red,
    dashed
    ]
    {0}; % x-(2*(x**.5)

\draw [color=orange,
    line width=2mm] (0.5,-1) -- (0.5,1);

\addplot[
    name path=fd,
    domain=1:3, 
    samples=100, 
    color=green,
    line width=2mm
    ]
    {-2};

\addplot [
        thick,
        color=gray,
        fill=gray, 
        fill opacity=0.3
    ]
    fill between[
        of=f and g,
        soft clip ={domain=1:3},
    ];

\end{axis}

\end{tikzpicture}}
\end{center}

\vspace{-7pt}

\caption{Double root in $\mathcal{P}$ (in blue) extending infinity $q\to +\infty$. Range in $(p,q)$ shaded.}
\end{subfigure}

\vspace{-5pt}

\caption{Polynomial and range of coordinates.}
\end{figure}
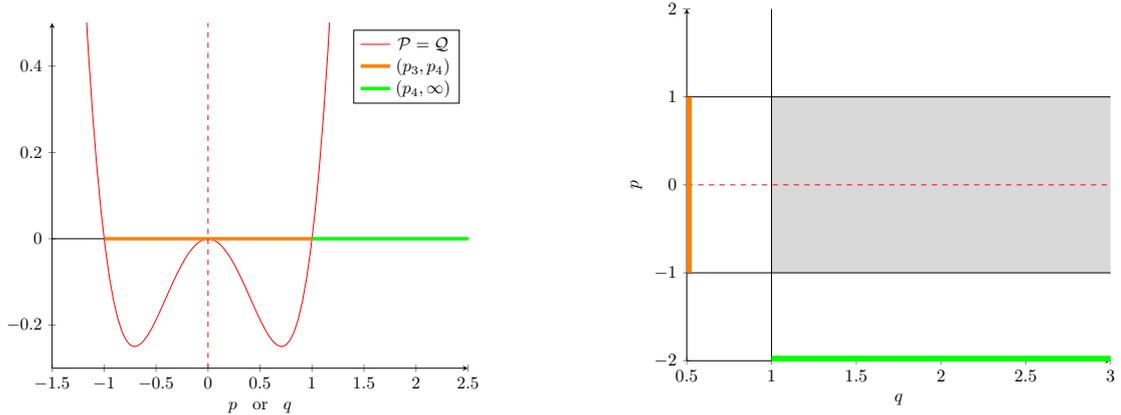

\begin{rem}
    Recall that from Remark \ref{remark all cusp rech infinity} we cannot have a double root in $\mathcal{Q}$ on $(p_4,+\infty)$. All we will find instead is a double root of $\mathcal{P}$ on $(p_3,p_4)$ corresponding to a cusp in the manifold extending to infinity. 
\end{rem}

We need our double root $p_0$, to lie in $(p_3,p_4)$ so that it is reflected in our metric. Since the sum of the roots is $0$ (the cubic coefficient of the polynomial is zero), $p_0 = -\frac{p_3+p_4}{2}$ and so $p_0\in(p_3,p_4)$ imposes 

\vspace{-5pt}

\begin{equation}
    p_3 < 0 < p_4,\; \text{ and  } \; \frac{1}{3}|p_3| < |p_4| < 3|p_3|.
\end{equation}
We can find this polynomial \eqref{polynomial CP} as a limit of polynomials with two complex conjugate roots: for $\epsilon\geqslant 0$
\begin{equation}
    \mathcal{P}_\epsilon(p) = (p-p_3)(p-p_4)\Big(\Big(p+\frac{1}{2}(p_3+p_4)\Big)^2+\epsilon^2\Big)
\end{equation}
where we get the double root mentioned above when $\epsilon \to 0$ and we also let $\mathcal{Q}_\epsilon = \mathcal{P}_\epsilon$ to satisfy the above regularity condition of \cite{Martelli_2013}. Since the roots of the polynomials are the same, the intervals in which these are defined stay the same. Geometrically, in the limit $\epsilon\to 0$, the metrics \eqref{Family CP} associated to $\mathcal{Q}_\epsilon = \mathcal{P}_\epsilon$ develop a cusp along $\{p=-(p_3+p_4)/2\}$ separating the manifold in two parts by an argument similar to Section \ref{section one double root}.
\\

The topology of the manifold is that of $\mathbb{CP}^2$ minus a ball and the conformal infinity is $\mathbb{S}^3$. The ``bolt'' of the metric is reached at $[p_3,p_4]\times\{q=1\}$ which is a codimension $2$ submanifold (a $2$-sphere) because of the degeneration of the metric there, see \cite{Martelli_2013} or the discussion in Section \ref{section appendix possible asymptotics}.

\begin{rem}
It is likely possible to obtain infinitely many different topologies from the Carter-Pleba\'nski family of metrics by having a larger and larger ``self-intersection'' for the $2$-sphere while obtaining a conformal infinity $\mathbb{S}^3/\mathbb{Z}_k$ for $\mathbb{Z}_k$ a cyclic subgroup of $SU(2)$ acting freely on $\mathbb{S}^3$. See \cite[Section 5.1]{CHEN2010207} for a discussion of the regularity conditions and possible topologies. In the larger Pleba\'nski-Demia\'nski family of metrics, we believe that there is also a large class of additional possible topologies, with two ``bolts'' (and a ``NUT''). The conformal infinity, could this time be an arbitrary lens space. See \cite[Section 5.2]{CHEN2010207} for a discussion of the regularity conditions and possible topologies.
\end{rem}

%To get this double root given by $\mathcal{P}(p)=(p-p_3)(p-p_4)(p+\frac{1}{2}(p_3+p_4))$\todo{$(p+\frac{1}{2}(p_3+p_4))^2$ ? So that there is a double root at $p_0=\frac{1}{2}(p_3+p_4)$}, we need $p_3 < 0 <  p_4$ (equality strict?), that $|p_3|<3|p_4|$ and $|p_4|<3|p_3|$ to ensure $p_0 \in (p_3,p_4)$ and any $k < 0$ such that $Q(x) = P(x) + kx$. 

%To see our conditions on the root, since the sum of the roots is 0, we have that $p_0 = -\frac{1}{2}(p_3+p_4)$. Since we need $p_0 \in (p_3,p_4)$ so that the root is reflected in our metric. If both of our terms were positive or negative, $p_0$ would have the opposite sign so it can't be in $(p_3,p_4)$. Similarly, if either of the conditions about the magnitude of the roots doesn't hold, $p_0$ won't be in $(p_3,p_4)$. 

%The reason why we have that $M - N > 0$ is because it ensures that $p_4 < q_+$. To see this, we know that $p_4$ and $q_+$ are the greatest roots of each $P$ and $Q$ respectively and are both greater than 0. We also have that after each of these roots, the polynomials are positive. Then we have that if $p_4 < q_+$, then $Q(q_+) - P(q_+) = kq_+$ and since the left hand side is less than 0, it implies $k < 0$. Similarly, if $q_+ < p_4$, then we have $k > 0$. 

\section{Degenerations in the Pleba\'nski-Demia\'nski family of metrics}\label{section degeneration conformal infinity}

%\todo[inline]{I think that generally, the pictures are too large, it would be good to have them side by side when it makes sense and maybe have other ones smaller or with text continuing on their side. (the shorter the article, the easier it is to publish!)}

We will now turn to the general PD family of metrics. The above degenerations of Sections \ref{section degeneration ads c metric} and \ref{section degeneration CP} can be found in the full family of Pleba\'nski-Demia\'nski, but we focus on exhibiting new behaviors of complete metrics whose conformal infinities develop unexpected types of singularities. We prove Theorem \ref{degeneration infinity}. 

In this section we consider a subfamily of metrics in \eqref{family PD with a} with $a=1$, parametrizing our polynomials as

\vspace{-5pt}

\begin{equation}
    P_{\infty}(x) = 
%-\frac{12}{11}(x-\frac{1}{2})(x-1)^2(x-\frac{1}{6})$$
%$$Q_{\infty}(y) = -\frac{12}{11}(y-\frac{1}{2})(y-1)^2(y-\frac{1}{6}) + y^4-1
C_{\infty}(x-\alpha_1)((x-1+\alpha_2)^2+\alpha_3)(x-\alpha_4)\label{polynomial naked singularity}
\end{equation}

\vspace{-5pt}

\noindent with $C_{\infty} = (-1+\alpha_1\alpha_2^2\alpha_4+\alpha_1\alpha_4-2\alpha_1\alpha_2\alpha_4+\alpha_1\alpha_3\alpha_4)^{-1}$ and $Q_{\infty}(y)  =  P_{\infty}(y)+y^4-1$.
These metrics satisfy the Einstein Condition \eqref{Einstein condition} with $\Lambda = -3$ for all $\alpha_1,\alpha_2,\alpha_3, \alpha_4\in\mathbb{R}$.

\subsection{A naked singularity in conformal infinity only}

\noindent  Setting $\alpha_2 = \alpha_3 = 0$ and choosing distinct $\alpha_1,\alpha_4 \in \mathbb{R}$ in \eqref{polynomial naked singularity}, the polynomial $P_{\infty}$ has a double root only at $x = 1$ while $Q_\infty$ has a simple root at $1$.
%the conformal infinity $y = x$ which is reached at the point $(x,y)=(1,1)$\todo{and a simple root for $Q$}. 
This will correspond to a \textit{Naked Singularity} in the metric, which we describe in Section \ref{naked singularity} once we ensure that our metric is smooth and Riemannian. We first need to verify that we have the correct signs $P_{\infty}>0$ and $Q_{\infty}<0$ on the region $\alpha_1 \leqslant x < y \leqslant1$. Assume that $\alpha_1 < 1 \leqslant\alpha_4$, then the inequality $P_{\infty}'(\alpha_1) = \frac{(\alpha_1-1)^2(\alpha_1-\alpha_4)}{\alpha_1 \alpha_4-1 } > 0$, which is satisfied whenever $\alpha_1 < \alpha_4^{-1}$, guarantees that $P_{\infty} >0$ on $(\alpha_1,1)$. To guarantee that $Q_\infty$ has the right sign, it is enough to impose $-1<
\alpha_1 < 0$. 
\begin{rem}
    This is true on a larger range of values of $\alpha_1$ which we do not attempt to describe. For instance, the graphs below show examples where the sign conditions for \ref{family PD with a} to be Riemannian are satisfied for $\alpha_1>0$, but the example $\alpha_1 = 0.01$, $\alpha_2=\alpha_3=0$ and $\alpha_4 = 7$ does not yield the right sign.
\end{rem}

Lastly, we assume that $\varphi$ and $\psi$ satisfy the periodicity conditions imposed in Proposition \ref{regularity criterion} to ensure that we find smooth metrics.

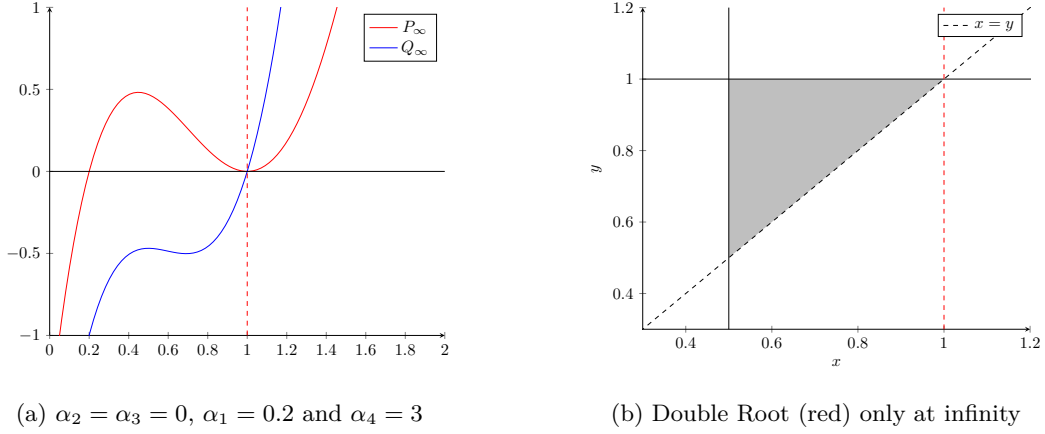
\begin{figure}[H]
\centering
\begin{subfigure}[t]{0.45\textwidth}
\begin{center}
\resizebox{0.8\linewidth}{!}{
\begin{tikzpicture}

\begin{axis}[
        axis lines = left,
        xlabel = \(\),
        ylabel = {\(\)},
        ymin = -1,
        ymax = 1,
        xmin = 0,
        xmax = 2,
    ]
    %Below the red parabola is defined
    \addplot [
        domain=0:2, 
        samples=100, 
        color=red,
    ]
    {-2.5*(x-0.2)*(x-1)^2*(x-3)};
    \addlegendentry{\(P_{\infty}\)}
    %Here the blue parabola is defined
    \addplot [
        domain=0:2, 
        samples=100, 
        color=blue,
        ]
        {-2.5*(x-0.2)*(x-1)^2*(x-3)+x^4-1};
    \addlegendentry{\(Q_{\infty}\)}
    
    \draw[red, dashed] (1, -1) -- (1,1);
    
    \addplot [
        domain=0:2, 
        samples=100, 
        color=black
        ]
        {0};
    \end{axis}
    
\end{tikzpicture}}
\end{center}

\vspace{-5pt}

\caption{$\alpha_2 = \alpha_3 = 0$, $\alpha_1 = 0.2$ and $\alpha_4 = 3$ }
\end{subfigure}
\begin{subfigure}[t]{0.45\textwidth}
\begin{center}
\resizebox{0.8\linewidth}{!}{
\begin{tikzpicture}

\begin{axis}[
        axis lines = left,
        xlabel = \(x\),
        ylabel = {\(y\)},
        ymin = 0.3,
        ymax = 1.2,
        xmin = 0.3,
        xmax = 1.2,
    ]
    
    \addplot [
        name path=g,
        domain=0:2, 
        samples=100, 
        color=black,
        dashed
    ]
    {x};
    \addlegendentry{\(x=y\)}
    \addplot [
        name path =f,
        domain=0:2, 
        samples=100, 
        color=black,
    ]
    {1};
    \draw (0.5,0 |- current axis.south) -- (0.5,2 |- current axis.north);
    \draw (1.5, 0 |- current axis.south) -- (1.5,2 |- current axis.north);
    \draw[red, dashed] (1, 0 |- current axis.south) -- (1,2 |- current axis.north);
    
    \addplot [
            thick,
            color=gray,
            fill=gray, 
            fill opacity=0.5
        ]
        fill between[
            of=f and g,
            soft clip ={domain=0.5:1},
        ];
    \end{axis}

\end{tikzpicture}
}
\end{center}

\vspace{-5pt}

\caption{Double Root (red) only at infinity}
\end{subfigure}

\vspace{-5pt}

\caption{Example polynomials and region for metric with naked singularity at infinity.} \label{Example}
\end{figure}
The metrics obtained in this way can be approached by perturbing the parameters $\alpha_2$ and $\alpha_3$ in various ways around $(0,0)$. This gives the following different types of degenerations degenerations, which we describe below.

 \paragraph{Degeneration 1: from a smooth metric to a naked singularity.}
 
 By taking $\alpha_3 > 0$ and keeping $\alpha_2 = 0$, the double root of $P_{\infty}$ at $1$ is replaced with two complex roots, see Figure \ref{figure complex conjugate boundary}. Taking the limit $\alpha_3\to 0$ yields the above naked singularity. Similarly, by taking $\alpha_2 < 0$ and $\alpha_3 = 0$, the double root of $P_{\infty}$ is moved past the conformal infinity $y=x$, see Figure \ref{double root after 1}. Taking the limit $\alpha_2\to 0$ yields the above naked singularity.
 
 Both of these situations yield a smooth metric at conformal infinity by Proposition \ref{regular conformal infinity}. Indeed, $P_\infty$ does not have any root close to the root of $Q_{\infty}$.
 
 \begin{figure}[H]
\centering
\begin{subfigure}[t]{0.45\textwidth}
\begin{center}
\resizebox{0.8\linewidth}{!}{
\begin{tikzpicture}

\begin{axis}[
    axis lines = left,
    xlabel = \(\),
    ylabel = {\(\)},
    ymin = -1,
    ymax = 1,
    xmin = 0,
    xmax = 2,
]
%Below the red parabola is defined
\addplot [
    domain=0:2, 
    samples=100, 
    color=red,
]
{-2.53807*(x-0.2)*((x-1)^2+0.01)*(x-3)};
\addlegendentry{\(P_{\infty}\)}
%Here the blue parabola is defined
\addplot [
    domain=0:2, 
    samples=100, 
    color=blue,
    ]
    {-2.53807*(x-0.2)*((x-1)^2+0.01)*(x-3)+x^4-1};
\addlegendentry{\(Q_{\infty}\)}

\addplot [
    domain=0:2, 
    samples=100, 
    color=black
    ]
    {0};

\end{axis}
\end{tikzpicture}}
\end{center}

\vspace{-10pt}

\caption{$\alpha_3 > 0$ and $\alpha_2 = 0$.}\label{figure complex conjugate boundary}
\end{subfigure}
\begin{subfigure}[t]{0.45\textwidth}
\begin{center}
\resizebox{0.8\linewidth}{!}{
\begin{tikzpicture}

\begin{axis}[
    axis lines = left,
    xlabel = \(\),
    ylabel = {\(\)},
    ymin = -1,
    ymax = 1,
    xmin = 0,
    xmax = 2,
]
%Below the red parabola is defined
\addplot [
    domain=0:2, 
    samples=100, 
    color=red,
]
{-2.95421*(x-0.2)*((x-1-0.05)^2)*(x-3)};
\addlegendentry{\(P_{\infty}\)}
%Here the blue parabola is defined
\addplot [
    domain=0:2, 
    samples=100, 
    color=blue,
    ]
    {-2.95421*(x-0.2)*((x-1-0.05)^2)*(x-3)+x^4-1};
\addlegendentry{\(Q_{\infty}\)}

\addplot [
    domain=0:2, 
    samples=100, 
    color=black
    ]
    {0};

\end{axis}
\end{tikzpicture}
}
\end{center}

\vspace{-10pt}

\caption{$\alpha_2 < 0$ and $\alpha_3 = 0$.}\label{double root after 1}
\end{subfigure}

\vspace{-5pt}

\caption{Smooth Metric to Naked Singularity} \label{smooth to naked}
\end{figure}
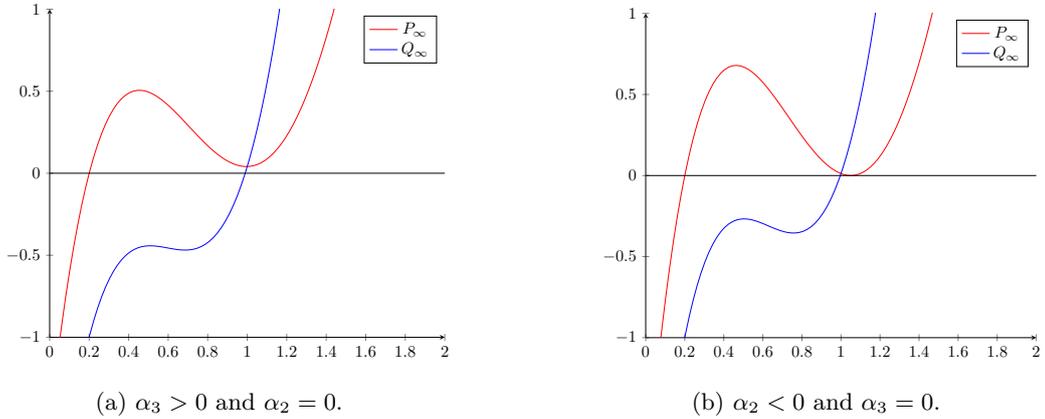

%By taking $\alpha_3 > 0$ and keeping $\alpha_2 = 0$, the double root of $P_{\infty}$ at $1$ is replaced with two complex roots, which yields a smooth metric.\todo{ref appendix} 

\paragraph{Degeneration 2: from a conical singularity to a naked singularity.}
By taking $\alpha_3 < 0$ and $\alpha_2 = 0$, see Figure \ref{figure angle topology}, the double root of $P_{\infty}$ at $1$ is split in two real roots $x_-<1<x_+$. This changes the topology and creates a codimension $2$ cone-edge singularity along $\{x=x_-\}$ by Lemma \ref{regularity at one rod}, extending to the conformal infinity $\{x=y\}$. As $\alpha_3\to 0$, the angle tends to zero and a \textit{naked singularity} appears while the singularities in the bulk are ``sent to infinity''. 

By setting $\alpha_3 < 0$ and $\alpha_2 = -\sqrt{-\alpha_3}$ as in Figure \ref{conical to naked b} %, we have 
%$(x-1+\alpha_2)^2+\alpha_3 = (x-1)(x-1-2\sqrt{-\alpha_3})$, thus
the double root of $P_{\infty}$ at $1$ is split into a single root at $1$ and a root larger than $1$. This gives a conical singularity in the metric at the conformal infinity \textit{only} this time. As $\alpha_3\to 0$, the angle tends to zero and a naked singularity appears in the limit.
 \begin{figure}[H]
\centering
\begin{subfigure}[t]{0.45\textwidth}
\begin{center}
\resizebox{0.8\linewidth}{!}{
\begin{tikzpicture}

\begin{axis}[
    axis lines = left,
    xlabel = \(\),
    ylabel = {\(\)},
    ymin = -1,
    ymax = 1,
    xmin = 0,
    xmax = 2,
]
%Below the red parabola is defined
\addplot [
    domain=0:2, 
    samples=100, 
    color=red,
]
{-2.46305*(x-0.2)*((x-1)^2-0.01)*(x-3)};
\addlegendentry{\(P_{\infty}\)}
%Here the blue parabola is defined
\addplot [
    domain=0:2, 
    samples=100, 
    color=blue,
    ]
    {-2.46305*(x-0.2)*((x-1)^2-0.01)*(x-3)+x^4-1};
\addlegendentry{\(Q_{\infty}\)}

\addplot [
    domain=0:2, 
    samples=100, 
    color=black
    ]
    {0};

\end{axis}
\end{tikzpicture}}
\end{center}

\vspace{-10pt}

\caption{$\alpha_3 < 0$ and $\alpha_2 = 0$. }\label{figure angle topology}
\end{subfigure}
\begin{subfigure}[t]{0.45\textwidth}
\begin{center}
\resizebox{0.8\linewidth}{!}{
\begin{tikzpicture}

\begin{axis}[
    axis lines = left,
    xlabel = \(\),
    ylabel = {\(\)},
    ymin = -1,
    ymax = 1,
    xmin = 0,
    xmax = 1.7,
]
%Below the red parabola is defined
\addplot [
    domain=0:2, 
    samples=100, 
    color=red,
]
{-3.1731*(x-0.2)*((x-1-0.0707)^2-0.005)*(x-3)};
\addlegendentry{\(P_{\infty}\)}
%Here the blue parabola is defined
\addplot [
    domain=0:2, 
    samples=100, 
    color=blue,
    ]
    {-3.1731*(x-0.2)*((x-1-0.0707)^2-0.005)*(x-3)+x^4-1};
\addlegendentry{\(Q_{\infty}\)}

\addplot [
    domain=0:2, 
    samples=100, 
    color=black
    ]
    {0};

\end{axis}
\end{tikzpicture}
}
\end{center}

\vspace{-10pt}

\caption{$\alpha_3 < 0$ and $\alpha_2 = -\sqrt{-\alpha_3}$. }\label{conical to naked b}
\end{subfigure}

\vspace{-5pt}

\caption{Conical Singularity to Naked Singularity} \label{conical to Naked}
\end{figure}
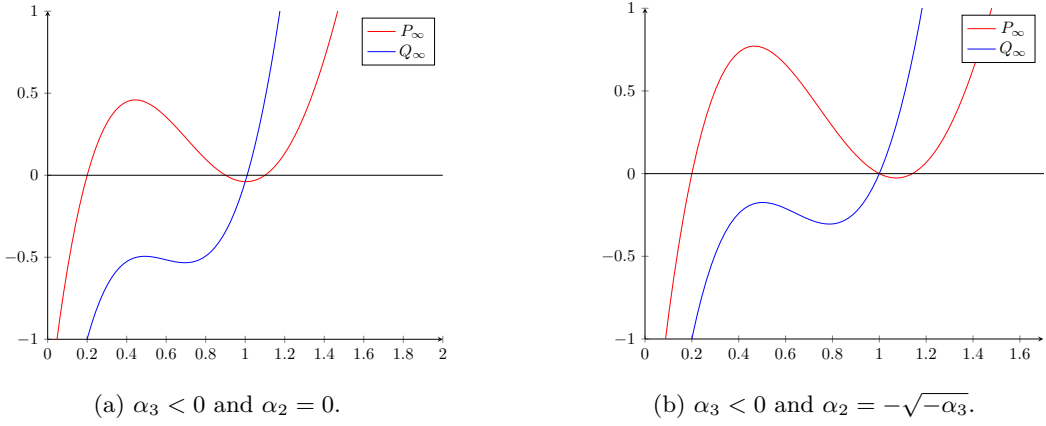

%\todo[inline]{Is it always true that $ Q_\infty $ has a double root in the case (b)? If so, we should note it somewhere, if not, we should choose another example not to distract the reader with another degenerate situation} Its not true

%By taking $\alpha_3 < 0$ and $\alpha_2 = 0$, this replaces the double root at $1$ with two real roots, which yields conical singularities in the metric.\todo{ref appendix}

%By setting $\alpha_3 < 0$ and $\alpha_2 = -\sqrt{-\alpha_3}$, we have 
%$(x-1+\alpha_2)^2+\alpha_3 = (x-1)(x-1-2\sqrt{-\alpha_3})$, thus the double root of $P_{\infty}$ at $1$ is split into a single root at $1$ and a root larger than $1$. This gives a conical singularity in the metric at the conformal infinity only.\todo{ref appendix}

\vspace{-13pt}

\hspace{-12pt}\begin{minipage}{0.63\textwidth}
\paragraph{Degeneration 3: from cusp to naked singularity.} By taking $\alpha_2 > 0$ and $\alpha_3 = 0$, the double root $1-\alpha_2$ of $P_{\infty}$ is moved to the left of the root in $Q_\infty$. This creates a cusp in the bulk metric as well as in its infinity by Sections \ref{section appendix possible asymptotics} and \ref{section appendix possible asymptotics boundary} in the appendix. 
\\

As in Section \ref{section AdS C metric}, this cusp at $\{x=1-\alpha_2\}$ separates the manifold in two regions infinitely far apart, and the conformal infinity in two finite volume manifolds with cusp ends. 
\\

When $\alpha_2\to 0$, the volume of $\{1-\alpha_2<x=y<1\}$ tends to zero and the region disappears, the metric on $\{\alpha_1<x=y<1-\alpha_2\}$ has infinite diameter for $\alpha_2>0$ but finite diameter in the limit $\alpha_2\to 0$ (these remarks do not depend on the representative of the conformal class).
\end{minipage}
\begin{minipage}{0.35\textwidth}
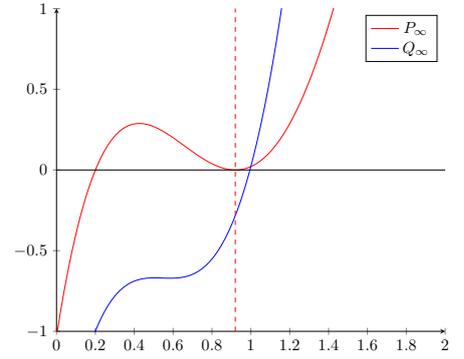
\begin{figure}[H]
\begin{center}

\resizebox{\linewidth}{!}{\begin{tikzpicture}

\begin{axis}[
    axis lines = left,
    xlabel = \(\),
    ylabel = {\(\)},
    ymin = -1,
    ymax = 1,
    xmin = 0,
    xmax = 2,
]
%Below the red parabola is defined
\addplot [
    domain=0:2, 
    samples=100, 
    color=red,
]
{-2.03186*(x-0.2)*((x-1+0.08)^2)*(x-3)};
\addlegendentry{\(P_{\infty}\)}
%Here the blue parabola is defined
\addplot [
    domain=0:2, 
    samples=100, 
    color=blue,
    ]
    {-2.03186*(x-0.2)*((x-1+0.08)^2)*(x-3)+x^4-1};
\addlegendentry{\(Q_{\infty}\)}

\draw[red, dashed] (1-0.08, -1) -- (1-0.08,1);

\addplot [
    domain=0:2, 
    samples=100, 
    color=black
    ]
    {0};

\end{axis}
\end{tikzpicture}}

\vspace{-10pt}

\caption{Cusp to Naked Singularity}
\end{center}
\end{figure}
\end{minipage}

%We can approach this metric by choosing parameters $\alpha_1,\alpha_2,\alpha_3,\alpha_4$ and setting 

%$$P(x) = \frac{1}{-1+\alpha_1\alpha_2^2\alpha_4+\alpha_1\alpha_4-2\alpha_1\alpha_2\alpha_4+\alpha_1\alpha_3\alpha_4}(x-\alpha_1)((x-1+\alpha_2)^2+\alpha_3)(x-\alpha_4)$$

%$Q(y) = \frac{1}{-1+\alpha_1\alpha_2^2\alpha_4+\alpha_1\alpha_4-2\alpha_1\alpha_2\alpha_4+\alpha_1\alpha_3\alpha_4}(y-\alpha_1)((y-1+\alpha_2)^2+\alpha_3)(y-\alpha_4)+y^4-1$$
%which satisfies the Einstein Conditions and the desired signs. 

\vspace{-3pt}

This is a manifestation of cusp degenerations in the bulk manifold comparable to those of Section \ref{section AdS C metric}. Indeed, in the family of metrics obtained from \eqref{polynomial naked singularity}, there is a $4$-dimensional family of smooth Poincaré-Einstein metrics with a ($3$-dimensional) boundary constituted of metrics with one cusp separating the manifold in two set, and a $2$-dimensional family with two cusps separating the manifold in three.

\vspace{-2pt}

\begin{rem}
    There are important differences with Section \ref{section AdS C metric}. The cusps from \eqref{family PD with a} for $a=1$ are ``twisted'' (see \eqref{separating cusp}) and do not look like mere products of surfaces in the limit. Moreover, as described above, as $\alpha_2\to 0$ the cusps ``escapes'' to infinity creating the above unexpected naked singularity at infinity. This was impossible in the family \eqref{C-metric} because the double root in $P$ could not approach $0$ and the double root in $Q$ could not approach $-1$.
\end{rem}

\subsection{Two cusps at conformal infinity only}

\vspace{-13pt}

\hspace{-12pt}\begin{minipage}{0.56\textwidth}
This time, we exhibit a metric with codimension $2$ cusps ends at the conformal infinity \textit{only} -- in particular, the conformal infinity is not compact. Unlike the example of Section \ref{section degeneration ads c metric} these cusps do not cut the manifold in different pieces. Consider 

\vspace{-5pt}

$$
\begin{aligned}
P_{2,\infty}(x) &= -\frac{1}{2}(x-1)^3(x+1) \text{ and}\\ Q_{2,\infty}(y) &= \frac{1}{2}(y-1)(y+1)^3
\end{aligned}$$

\vspace{-5pt}

which are limit of the polynomials in \eqref{polynomial naked singularity} for $\alpha_1=-1$, $\alpha_2=0$, $\alpha_3=0$ and $\alpha_4=1$. These polynomials have the desired signs on the region $-1 \leqslant x \leqslant 1, -1 \leqslant y \leqslant1$ making the metric \eqref{family PD with a} with $a=1$ Riemannian. Its infinity has \textit{two} cusp ends at the points $(-1,-1)$ and $(1,1)$ thanks to \eqref{dvp triple root simple root}. This is a limiting case for all the previous degenerations as well as a limit of naked singularities at either $1$ or $-1$.

\end{minipage}\hfill
\begin{minipage}{0.42\textwidth}
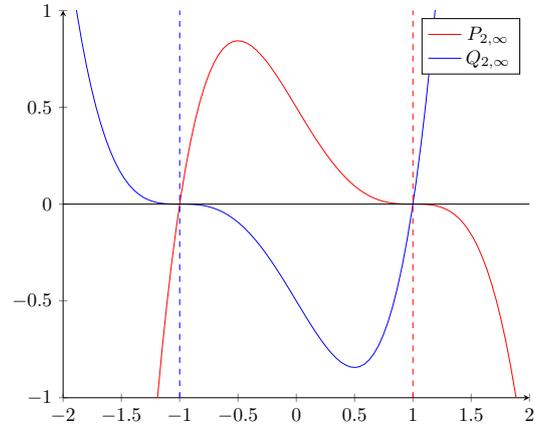
\begin{figure}[H]
\begin{center}
\resizebox{\linewidth}{!}{\begin{tikzpicture}[scale=0.8]

\begin{axis}[
    axis lines = left,
    xlabel = \(\),
    ylabel = {\(\)},
    ymin = -1,
    ymax = 1,
    xmin = -2,
    xmax = 2,
]
%Below the red parabola is defined
\addplot [
    domain=-2:2, 
    samples=100, 
    color=red,
]
{-0.5*(x-1)^3*(x+1)};
\addlegendentry{\(P_{2,\infty}\)}
%Here the blue parabola is defined
\addplot [
    domain=-2:2, 
    samples=100, 
    color=blue,
    ]
    {0.5*(x-1)*(x+1)^3};
\addlegendentry{\(Q_{2,\infty}\)}

\addplot [
    domain=-2:2, 
    samples=100, 
    color=black
    ]
    {0};

\draw[red, dashed] (1, -1) -- (1,1);

\draw[blue, dashed] (-1, -1) -- (-1,1);

\end{axis}
\end{tikzpicture}}

\vspace{-10pt}

\caption{Two cusps at conformal infinity \textit{only}.}
\end{center}
\end{figure}
\end{minipage}
\begin{rem}
    From Section \ref{section PD}, one moreover notices that this metric is \emph{anti-selfdual} since the linear and cubic coefficients of $P_{\infty,2}$ are opposite.
\end{rem}

%\noindent \textbf{Sample surface of revolution plot as a test, feel free to remove and play around with}

%\begin{center}
%\begin{tikzpicture}
%\begin{axis}[grid=major]
%\addplot3 [
%surf,
%shader=faceted,
%samples=25,
%domain=0:2,
%y domain=0:1,
%] {(x^2+y^2)^{1/3}};
%\end{axis}
%\end{tikzpicture}

%\end{center}
\appendix

\section{Possible asymptotics of the bulk metrics and regularity}\label{section appendix possible asymptotics}
Let us describe here the possible asymptotics of our metrics and give the regularity conditions. The regularity conditions obtained for toric metrics are classical and we focus on ruling our conical singularities.

\subsection{At a simple root $x_1$ of $P$ and a generic point $y$}\label{one simple root}

As defined for instance in \cite{MR3065256}, a metric with cone-edge singularity of angle $2\pi\beta>0$ along a codimension $2$ submanifold $\Sigma$ has the following asymptotic at $\Sigma$: for a $2\pi$-periodic $\theta$ and a $1$-form $\omega$ on $\Sigma$
\begin{equation}
    dr^2 + \beta^2 r^2 (d\theta + \omega)^2 + g_\Sigma + \mathcal{O}(r^{1+\epsilon}), \text{ for $\epsilon>0$}.\label{conical al}
\end{equation}

\begin{lem}\label{regularity at one rod}
    At a simple root $x_1$ of $P$ and a generic point $y\notin\{-1/x_1,1/x_1\}$, our metric \eqref{family PD with a} with $a\in\{0,1\}$ has a cone-edge singularity whose angle is the period of $\frac{|P'(x_1)|}{2(1+a^2x_1^4)}\theta_1$, where $\theta_1:= \varphi + ax_1^2\psi$.
\end{lem}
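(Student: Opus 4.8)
The strategy is to localize the metric \eqref{family PD with a} near the hypersurface $\{x=x_1\}$, with $y$ fixed near a generic value, and show the transverse geometry is that of a $2$-plane with a cone point, matching the normal form \eqref{conical al}. First I would fix $y=y_0$ with $y_0\notin\{-1/x_1,1/x_1\}$ (and away from roots of $Q$), so that all the coefficients $Q(y_0)$, $1-a^2x^2y_0^2$ are smooth and nonvanishing in a neighborhood, and the first two terms of \eqref{family PD with a} together with the $dy^2$ term contribute a smooth, nondegenerate metric on the $(y,\psi)$-directions plus possibly the $\psi$--$\varphi$ cross term; collecting these gives the ``$g_\Sigma + \mathcal{O}(r^{1+\epsilon})$'' part once we introduce the radial coordinate. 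The two terms that matter are $\frac{1-a^2x^2y^2}{P(x)}dx^2$ and $\frac{P(x)}{1-a^2x^2y^2}(d\varphi - ay^2 d\psi)^2$; since $x_1$ is a simple root, write $P(x) = P'(x_1)(x-x_1) + \mathcal{O}((x-x_1)^2)$, and note $P'(x_1)(x-x_1)$ has a definite sign on the side where the metric is Riemannian, so $|P(x)| \sim |P'(x_1)|\,|x-x_1|$.

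Next I would introduce the geodesic-type radial variable $r$ defined by $r = \int \sqrt{\frac{1-a^2 x^2 y_0^2}{|P(x)|}}\,dx$, so that near $x_1$ one gets $r \sim c\sqrt{|x-x_1|}$ with $c = 2\sqrt{(1-a^2x_1^2y_0^2)/|P'(x_1)|}$; equivalently $|x - x_1| \sim \tfrac{|P'(x_1)|}{4(1-a^2x_1^2 y_0^2)} r^2$. Then the $dx^2$ term becomes $dr^2$ and the angular term $\frac{P(x)}{1-a^2x^2y^2}(d\varphi - ay_0^2 d\psi)^2$ becomes, to leading order, $\Big(\tfrac{|P'(x_1)|}{2(1-a^2x_1^2y_0^2)}\Big)^2 r^2\,(d\varphi - ay_0^2 d\psi)^2$ — wait, one must be careful to track the constant: combining $|P(x)| \sim |P'(x_1)||x-x_1|$ with the expression for $|x-x_1|$ in terms of $r$ gives leading coefficient $\tfrac{P'(x_1)^2}{4(1-a^2x_1^2y_0^2)}r^2$ divided by $1-a^2x_1^2y_0^2$, i.e. $\Big(\tfrac{|P'(x_1)|}{2(1-a^2x_1^2y_0^2)}\Big)^2 r^2$. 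Here the orientation of the angular $1$-form matters: at $y_0$ the relevant coordinate on $\Sigma$ is exactly $\theta_1 := \varphi + a x_1^2\psi$, not $\varphi - ay_0^2\psi$; I would check that the discrepancy between $d\varphi - ay_0^2 d\psi$ and $d\varphi + a x_1^2 d\psi$ is absorbed into a smooth connection $1$-form $\omega$ on $\Sigma$ (the $(y,\psi)$-block), which is exactly the freedom allowed in \eqref{conical al}, together with a rescaling of the relevant Killing field. After the rescaling to make the period $2\pi$, the cone angle is $2\pi\beta$ where $\beta$ is read off as the period of $\frac{|P'(x_1)|}{2(1+a^2 x_1^4)}\theta_1$, using $1 - a^2 x_1^2 y^2 \to 1 + a^2 x_1^4$ only after restricting to $y = x_1$ at $\Sigma$ — I'd need to double-check whether the factor should be evaluated at the generic $y_0$ or only in the limit; the statement asserts $1+a^2x_1^4$, which corresponds to $y_0 \to x_1$, so presumably the claim is about the cone angle of the closed-up metric where $\Sigma$ sits inside the conformally compactified picture, and I would reconcile this with a careful bookkeeping of which coordinate closes up.

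The main obstacle I anticipate is precisely this bookkeeping of the angular coordinate and its normalization: identifying which linear combination $\theta_1 = \varphi + a x_1^2 \psi$ is the one that must be $2\pi$-periodic for smoothness, separating the genuine cone contribution from the smooth ``twist'' $\omega$ along $\Sigma$, and getting the constant $\frac{|P'(x_1)|}{2(1+a^2x_1^4)}$ exactly right (including the $a=1$ versus $a=0$ cases simultaneously). The analytic estimates — that the remainder is $\mathcal{O}(r^{1+\epsilon})$ and that the $(y,\psi)$-block extends smoothly — are routine Taylor expansions given $x_1$ is a \emph{simple} root and $y_0$ avoids the bad set, so I would state them and move on. Finally, I would invoke the standard fact (as in \cite{MR3065256}) that matching the normal form \eqref{conical al} with this $\beta$ is equivalent to the cone-edge singularity having angle equal to the period of that rescaled coordinate, which is smooth (angle $2\pi$) exactly under the periodicity hypotheses used elsewhere in the paper.
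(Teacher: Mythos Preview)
Your approach is essentially that of the paper --- Taylor-expand $P$ at the simple root, isolate the singular $(x,\text{angle})$-block, and pass to a polar radial coordinate --- but you have misidentified the origin of the constant $1+a^2x_1^4$, and this is the one genuine gap. It does \emph{not} come from evaluating $1-a^2x_1^2y^2$ at $y=x_1$; that would give $1-a^2x_1^4$ with the wrong sign, and in any case $\Sigma=\{x=x_1\}$ is a surface with $y$ ranging freely, so the cone angle had better be $y$-independent.

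The constant comes instead from the linear change of basis on the angular torus. The correct coframe on $\Sigma$ is $(d\theta_1,\omega_1)$ with $\theta_1=\varphi+ax_1^2\psi$ and $\omega_1:=d\psi-ax_1^2d\varphi$, chosen because $\omega_1$ is the restriction at $x=x_1$ of the nondegenerate angular direction $d\psi-ax^2d\varphi$, and $d\theta_1$ is its Euclidean-orthogonal complement. Inverting this $2\times 2$ change (determinant $1+a^2x_1^4$) gives
\[
d\varphi - ay^2\,d\psi \;=\; \frac{1-a^2x_1^2y^2}{\,1+a^2x_1^4\,}\Big(d\theta_1 + f(y,x_1)\,\omega_1\Big),\qquad f(y,x_1)=-\,\frac{a(x_1^2+y^2)}{1-a^2x_1^2y^2}.
\]
Squaring and inserting into your angular term, the prefactor $\frac{(1-a^2x_1^2y^2)^2}{(1+a^2x_1^4)^2}$ combines with your $\big(\tfrac{|P'(x_1)|}{2(1-a^2x_1^2y_0^2)}\big)^2$ to yield exactly $\big(\tfrac{|P'(x_1)|}{2(1+a^2x_1^4)}\big)^2$, with the $y$-dependence cancelling identically. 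The paper does this via the substitution $x=x_1+\tfrac{P'(x_1)}{4}r^2$ rather than integrating for $r$, but the two are equivalent at leading order. Once you write the change of basis explicitly, your ``which $y$'' worry disappears and the stated cone angle follows.
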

\begin{proof}
To do this, we first note that close to its root $x_1$, we have $P(x) = P'(x_1)(x-x_1) + O((x-x_1)^2)$ at first order. Close to the root $x=x_1$ and at $y\neq\pm x_1^{-1}$ which is not a root of $Q$, the metric therefore reads:
\begin{align*}
    g_{PD} = \frac{1}{(x_1-y)^2}&\left[-\frac{Q(y)}{1-a^2x_1^2y^2}(d\psi-ax_1^2d\varphi)^2 - \frac{1-a^2x_1^2y^2}{Q(y)}dy^2\right.\\
    &\left. + \frac{P'(x_1)(x-x_1)}{1-a^2x_1^2y^2}(d\varphi-ay^2d\psi)^2+\frac{1-a^2x_1^2y^2}{P'(x_1)(x-x_1)}dx^2\right] + \mathcal{O}((x-x_1)^2).
\end{align*}
Our codimension $2$ submanifold $\Sigma$ is given by $\{x=x_1\}$, hence $dx=0$ and $\theta_1 := \varphi + ax_1^2\psi =cst$, that is $d\theta_1=d\varphi + ax_1^2d\psi=0$ (this is chosen as the orthogonal of the $1$-form $d\psi-ax_1^2d\varphi$). The local coframe on $\Sigma$ we will use is therefore $ dy $ and $\omega_1:=d\psi-ax_1^2d\varphi$. With these notations, the metric becomes:
\begin{align*}
    g_{PD} = \frac{1}{(x_1-y)^2}&\left[-\frac{Q(y)}{1-a^2x_1^2y^2}\omega_1^2 - \frac{1-a^2x_1^2y^2}{Q(y)}dy^2\right.\\
    &\left. + (1-a^2x_1^2y^2)\left( \frac{P'(x_1)(x-x_1)}{(1+a^2x_1^4)^2}\left(d\theta_1 + f(y,x_1)\omega_1\right)^2+\frac{dx^2}{P'(x_1)(x-x_1)}\right)\right]+ \mathcal{O}((x-x_1)^2).
\end{align*}
where $f(y,x_1)=\frac{x_1^2+y^2}{1-x_1^2y^2}$ if $a= 1$ and $f(y,x_1)=0$ if $a=0$. The regularity of the metric close to $x=x_1$ therefore reduces to the regularity of $\frac{P'(x_1)(x-x_1)}{(1+x_1^4)^2}\left(d\theta_1 + f(y,x_1)\omega_1\right)^2+\frac{1}{P'(x_1)(x-x_1)}dx^2$.

Considering a change of variables $x=x_1+\frac{P'(x_1)}{4}r^2$, we find the conical singularity metric:
$$\frac{|P'(x_1)|^2}{4(1+a^2x_1^4)^2}r^2\left(d\theta_1 + f(y,x_1)\omega_1\right)^2+dr^2,$$ of angle the period of $\frac{|P'(x_1)|}{2(1+a^2x_1^4)}\theta_1$ by comparison with \eqref{conical al}. It will be smooth if and only if this period is $2\pi$.
\end{proof}

The case of a simple root $y_2$ of $Q$ is treated similarly and yields a cone-edge singularity whose angle is given by the period of $\frac{|Q'(y_2)|}{2(1+a^2y_2^4)}\theta_2$, where $\theta_2$ is $\psi + y_2^2\varphi$.
\subsection{At $x_1$ simple root of $P$ and $y_2$ simple root of $Q$}
Let us now consider $y = y_2$, a simple root of $Q$, and still assume that $x_1^2\neq y_2^{-2}$ if $a=1$. 

\begin{lem}\label{regularity at intersection of rods}
    Assume that $x_1$ is a simple root of $P$, and $y_2$ is a simple root of $Q$, and $1-a^2x_1^2y_2^2\neq 0$.  Then the metric \eqref{family PD with a} is smooth at $(x_1,y_2)$ if and only if both $\frac{|P'(x_1)|}{2(1+a^2x_1^4)}\theta_1$ and $\frac{|Q'(y_2)|}{2(1+a^2y_2^4)}\theta_2$ are $2\pi$-periodic, where $\omega_1=d\psi-ax_1^2d\varphi$ and $\omega_2=d\varphi-ay_2^2d\psi$.
\end{lem}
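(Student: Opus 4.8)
The plan is to reduce the two–root case to two independent applications of the previous one–root analysis, localizing near the codimension $4$ set $\{x=x_1,\ y=y_2\}$. First I would Taylor expand both polynomials at their simple roots, writing $P(x)=P'(x_1)(x-x_1)+\mathcal O((x-x_1)^2)$ and $Q(y)=Q'(y_2)(y-y_2)+\mathcal O((y-y_2)^2)$, and substitute into \eqref{family PD with a}. Because $1-a^2x_1^2y_2^2\neq 0$, the conformal prefactor $\tfrac{1}{(x_1-y_2)^2}$ and the cross-term coefficient $1-a^2x_1^2y_2^2$ are smooth, nonzero constants near $(x_1,y_2)$, so the metric splits, up to higher-order terms, into a sum of two ``$r^2d\theta^2+dr^2$''-type blocks: one in the $(x,\theta_1,\omega_1)$ directions exactly as in the proof of Lemma \ref{regularity at one rod}, and a symmetric one in the $(y,\theta_2,\omega_2)$ directions obtained from the analogous computation for a simple root of $Q$.

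The key steps, in order, are: (1) perform the change of variables $x=x_1+\tfrac{P'(x_1)}{4}\rho^2$ and $y=y_2+\tfrac{Q'(y_2)}{4}s^2$ simultaneously, turning the degenerate part of the metric into $dr^2+\tfrac{|P'(x_1)|^2}{4(1+a^2x_1^4)^2}\rho^2(d\theta_1+f_1\omega_1)^2$ plus $ds^2+\tfrac{|Q'(y_2)|^2}{4(1+a^2y_2^4)^2}s^2(d\theta_2+f_2\omega_2)^2$, where the $f_i$ are the smooth functions appearing in the one-root lemmas evaluated at $(x_1,y_2)$; (2) observe that near $(x_1,y_2)$ the base ``$\Sigma$-directions'' of each block degenerate to points, so what remains is a genuine product $\mathbb R^2_{(\rho,\theta_1)}\times\mathbb R^2_{(s,\theta_2)}$ fibered over the torus in the remaining angular directions through the connection $1$-forms $f_1\omega_1$, $f_2\omega_2$; (3) apply the standard smoothness criterion for $dr^2+\beta^2r^2(d\theta+\omega)^2$ — smooth iff $\theta$ has period making $\beta\theta$ $2\pi$-periodic (cf.\ the criterion used in Lemma \ref{regularity at one rod}) — to each factor independently, since the two blocks vanish to second order in independent radial variables and hence do not interact at the order that governs smoothness. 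This yields exactly the two stated periodicity conditions on $\tfrac{|P'(x_1)|}{2(1+a^2x_1^4)}\theta_1$ and $\tfrac{|Q'(y_2)|}{2(1+a^2y_2^4)}\theta_2$.

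The main obstacle I anticipate is checking that the two conical blocks really decouple at the relevant order, i.e.\ that the error terms $\mathcal O((x-x_1)^2)$ and $\mathcal O((y-y_2)^2)$, together with the $y$-dependence of $f(y,x_1)$ and the $x$-dependence of the coframe coefficients, do not spoil the product structure: one must verify that after the double substitution these become $\mathcal O(\rho^2)$ and $\mathcal O(s^2)$ corrections to a smooth metric on $\mathbb R^2\times\mathbb R^2\times T^2$, which is a smoothness-stable perturbation precisely because each radial coordinate vanishes quadratically. A secondary point requiring care is that the two angular directions $\theta_1=\varphi+ax_1^2\psi$ and $\theta_2=\psi+ay_2^2\varphi$ are independent coordinates on the torus exactly when $1-a^2x_1^2y_2^2\neq 0$ — this is where that hypothesis is used — so that prescribing periods for $\theta_1$ and $\theta_2$ separately is consistent; I would note this explicitly and then conclude by invoking the one-root criterion on each factor. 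The higher-order remainder bookkeeping is routine given the computation already carried out in Lemma \ref{regularity at one rod}, so I would not belabor it.
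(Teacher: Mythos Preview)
Your proposal is correct and follows essentially the same approach as the paper: Taylor expand $P$ and $Q$ at their simple roots, observe that the metric splits (up to $\mathcal O((x-x_1)^2+(y-y_2)^2)$) into two independent conical blocks, and apply the change of variables from Lemma~\ref{regularity at one rod} in each radial direction separately. Your write-up is in fact more explicit than the paper's---you spell out why $1-a^2x_1^2y_2^2\neq 0$ is needed for $\theta_1,\theta_2$ to be independent torus coordinates, and you discuss the decoupling of the error terms, whereas the paper simply records the first-order expansion and invokes the one-root argument in both variables without further comment.
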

\begin{proof}
Expanding the metric near $p=x_1$ and $q=y_2$, a first order development of the metric gives:
\begin{align*}
    g_{PD} = \frac{1}{(x_1-y_2)^2}&\left[-\frac{Q'(y_2)(y-y_2)}{1-a^2x_1^2y_2^2}(d\theta_2 + \tilde{\omega}_2)^2 - \frac{1-a^2x_1^2y_2^2}{Q'(y_2)(y-y_2)}dy^2\right.\\
    &\left. + (1-a^2x_1^2y_2^2)\left( \frac{P'(x_1)(x-x_1)}{(1+a^2x_1^4)^2}\left(d\theta_1 + \tilde{\omega}_1\right)^2+\frac{1}{P'(x_1)(x-x_1)}dx^2\right)\right] \\
    &+ \mathcal{O}((x-x_1)^2 + (y-y_2)^2).
\end{align*}
for some $1$-forms $\tilde{\omega}_1 = f_1(y,x_1)\omega_1$ and $\tilde{\omega}_2 = f_2(x,y_2)\omega_2$ for some smooth functions $f_1$ and $f_2$ whose explicit value does not affect the regularity ($f_1=f_2=0$ if $a=0$), and where $\omega_1=d\psi-ax_1^2d\varphi$ and $\omega_2=d\varphi-ay_2^2d\psi$.

The same change of variables as in Section \ref{one simple root} in both $x$ and $y$ ensures that the metric is smooth at $(x_1,y_2)$ if and only if $\frac{|P'(x_1)|}{2(1+a^2x_1^4)}\theta_1$ and $\frac{|Q'(y_2)|}{2(1+a^2y_2^4)}\theta_2$ are $2\pi$-periodic.
\end{proof}

We conclude with the following regularity proposition.
\begin{prop}\label{regularity criterion}
    Let $P$ and $Q$ be polynomials such that $P>0$ and $Q<0$ on $(x_1,y_2)\subset\mathbb{R}$ and assume that:  $x_1$ is a simple root of $P$, $y_2$ is a simple root of $Q$, and $1-a^2x^2y^2\neq 0$ for $x,y\in [x_1,y_2]$.
    
    Then, the metric \eqref{family PD with a} is smooth if and only if the variables $\frac{|P'(x_1)|}{2(1+a^2x_1^4)}\theta_1$ and $\frac{|Q'(y_2)|}{2(1+a^2y_2^4)}\theta_2$ are $2\pi$-periodic where $\theta_1:= \varphi + ax_1^2\psi$ and $\theta_2:= \psi + ay_2^2 \varphi$.
\end{prop}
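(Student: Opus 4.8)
The plan is to assemble Proposition \ref{regularity criterion} directly from the two local lemmas already proved, treating the interval $[x_1,y_2]$ as the intersection of a neighborhood of the boundary rod $\{x=x_1\}$ (where $P$ vanishes simply), a neighborhood of the boundary rod $\{y=y_2\}$ (where $Q$ vanishes simply), and the ``interior'' region where both $P<0$\ldots wait, where $P>0$, $Q<0$, and $1-a^2x^2y^2\neq 0$, so that the metric \eqref{family PD with a} is manifestly a smooth Riemannian metric in the given coordinate chart. The content to be checked is therefore purely the closing-up along the two codimension-$2$ loci and the single corner point where they meet.

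First I would observe that on the open region $\{x_1<x<y<y_2\}$ (with the coordinates $\varphi,\psi$ running over their torus), every coefficient in \eqref{family PD with a} is smooth and the quadratic form is positive definite: the hypotheses $P>0$, $Q<0$ force the signs of the four bracketed terms to line up, and $1-a^2x^2y^2\neq 0$ keeps the $(x-y)^{-2}\cdot(\cdots)$ factors from blowing up or degenerating. So smoothness there is automatic regardless of the periods. Next, along $\{x=x_1\}$ with $y$ a generic point, Lemma \ref{regularity at one rod} says the metric extends smoothly across this locus precisely when $\frac{|P'(x_1)|}{2(1+a^2x_1^4)}\theta_1$ is $2\pi$-periodic, with $\theta_1=\varphi+ax_1^2\psi$; symmetrically, along $\{y=y_2\}$ the remark following that lemma gives the condition on $\frac{|Q'(y_2)|}{2(1+a^2y_2^4)}\theta_2$. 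Finally, at the corner $(x_1,y_2)$, Lemma \ref{regularity at intersection of rods} (whose hypothesis $1-a^2x_1^2y_2^2\neq0$ is supplied by the assumption $1-a^2x^2y^2\neq0$ on all of $[x_1,y_2]^2$) shows that the metric extends smoothly there iff \emph{both} periodicity conditions hold simultaneously; in particular the corner imposes nothing beyond the two rod conditions. Piecing these together: the metric is smooth on the closed region iff it is smooth near each rod and near the corner, which by the three lemmas is equivalent to the conjunction of the two stated periodicity conditions. One should also remark that $P(x_1)=0$ with $P>0$ on $(x_1,y_2)$ forces $P'(x_1)>0$ (so $|P'(x_1)|=P'(x_1)$), and likewise $Q'(y_2)<0$, which is consistent with the absolute values appearing in the statement.

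The only genuine subtlety — and the step I expect to require the most care — is making the topological reduction rigorous: one must argue that no further compatibility is hidden, i.e. that once the two rods and their single intersection point close up smoothly, the resulting manifold (which is where $\varphi,\psi$ degenerate along $\{x=x_1\}$ and $\{y=y_2\}$ respectively, so that the total space is a toric $4$-manifold with the rectangle $[x_1,y_2]\times[x_1,y_2]$-type moment picture) carries a genuinely smooth Riemannian metric. This is the standard classification of smooth toric metrics from their moment data (as recalled in the references, e.g. \cite{MR3065256}), so I would simply invoke it: the two rods and the corner are the only places where the $T^2$-action has nontrivial stabilizer, the local models there are exactly those analyzed in Lemmas \ref{regularity at one rod} and \ref{regularity at intersection of rods}, and smoothness of a toric metric is detected rod-by-rod together with corner-by-corner. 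Hence the global statement follows, completing the proof.
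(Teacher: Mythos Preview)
Your proposal is correct and matches the paper's approach: the paper does not give a separate proof of Proposition~\ref{regularity criterion} at all, but simply states it as the conclusion following Lemmas~\ref{regularity at one rod} and~\ref{regularity at intersection of rods} (together with the $Q$-analogue of the former). Your write-up is in fact more detailed than the paper's, which leaves the assembly from the two local lemmas implicit.
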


\subsection{At a double root $x_1$ of $P$ and a generic $y$: separating cusp}\label{section one double root}

Similarly, close to $x_1$ a double root of the polynomial, one has $P(x) \approx P''(x_1)(x-x_1)^2/2$. As in Section \ref{one simple root}, we find that close to the same codimension $2$ submanifold $\Sigma$, the metric is asymptotic to:
\begin{equation}
    (1-a^2x_1^2y^2)\left(\frac{P''(x_1)(x-x_1)^2}{2(1+a^2x_1^4)^2}\left(d\theta_1 + f(y,x_1)\omega_1\right)^2+\frac{2dx^2}{P''(x_1)(x-x_1)^2}\right)+ g_\Sigma\label{separating cusp}
\end{equation}
for some smooth $y\mapsto f(y,x_1)$ vanishing when $a=0$ which is an asymptotically cuspidal metric. This is a smooth complete metric but it adds an end to the manifold. 

\subsection{Approaching a cuspidal end}\label{section approaching cusp}

Let us now explain how one can approach a codimension $2$ cuspidal end by smooth metrics. Assume that $x_1\pm i\epsilon$ are two complex conjugate roots of $P_\epsilon$ for $\epsilon>0$ that we will send to $0$. This time, we have the following second order approximation for $P_\epsilon(x)$ for $x$ close to $x_1$:
$P_\epsilon(x) \approx P''_\epsilon(x_1)\left((x-x_1)^2+\epsilon^2\right)/2 + \mathcal{O}((x-x_1)^3)$.

This implies that the metric is approximately
\begin{equation}
    (1-a^2x_1^2y^2)\left(\frac{P''_\epsilon(x_1)\left((x-x_1)^2+\epsilon^2\right)}{2(1+a^2x_1^4)^2}\left(d\theta_1 + f(y,x_1)\omega_1\right)^2+\frac{2dx^2}{P''_\epsilon(x_1)\left((x-x_1)^2+\epsilon^2\right)}\right)+ g_\Sigma.\label{separating cusp almost}
\end{equation}
This is a smooth metric, but along $\{x=x_1\}$ it is close to a thin cylinder, see the left picture of Figure \ref{figure cusp 3D}. As $\epsilon\to 0$, close to any $x\neq x_1$, the metric Cheeger-Gromov converges to the cuspidal metric \eqref{separating cusp} on compact sets, see the right image in Figure \ref{figure cusp 3D}.

\begin{figure}[H]
\centering 
\begin{subfigure}[t]{0.45\textwidth}
\includegraphics[scale = 0.15,trim={0 60pt 0 200pt},clip]{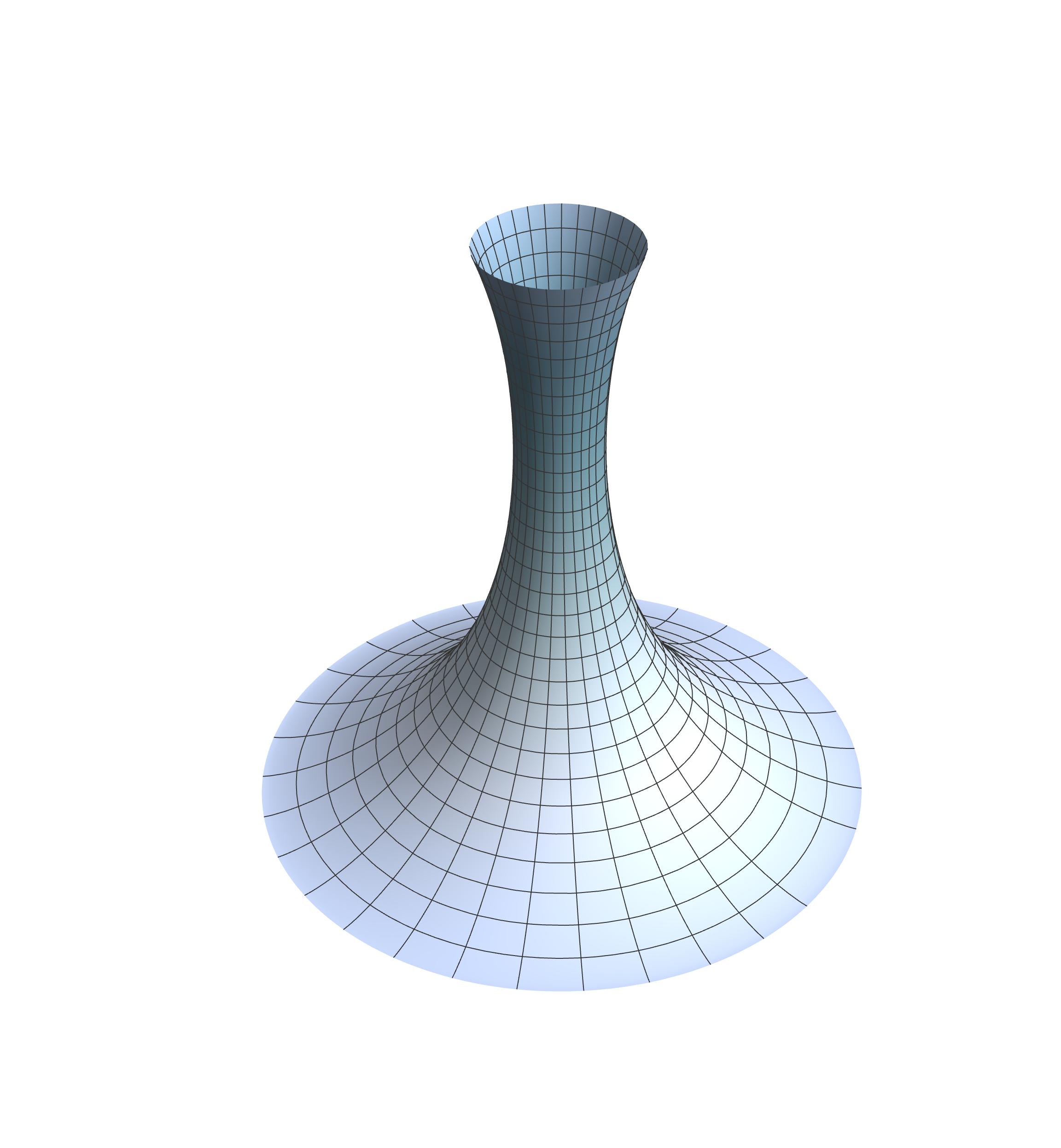}
\end{subfigure}
\begin{subfigure}[t]{0.45\textwidth}
\includegraphics[scale = 0.15,trim={0 60pt 0 200pt},clip]{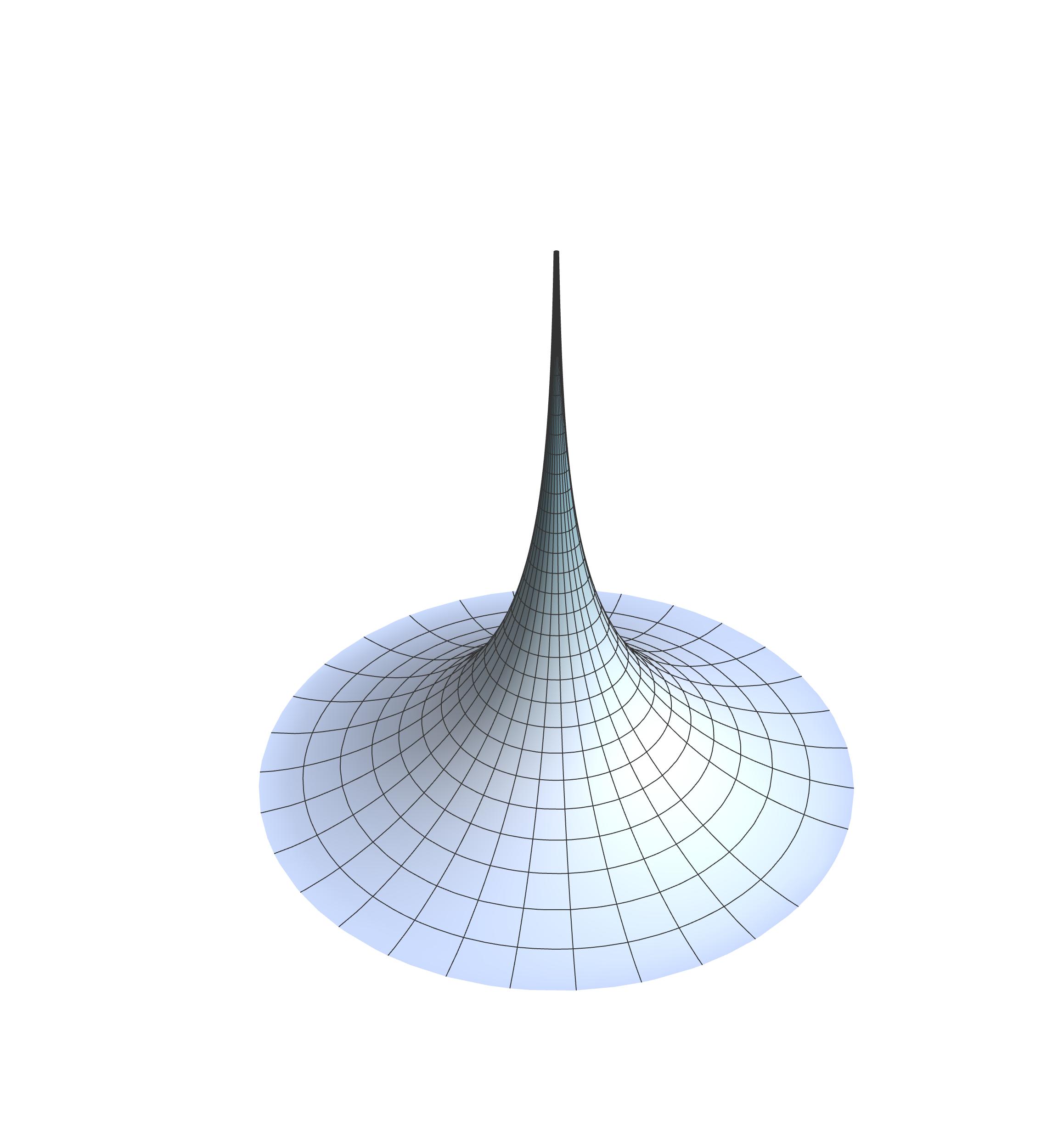}
\end{subfigure}

\vspace{-20pt}

\caption{Stages of cusp formation. The cusp on the right is infinitely long.}\label{figure cusp 3D}
\end{figure}

\section{Possible behaviors of the conformal boundary metrics}\label{section appendix possible asymptotics boundary}

As for the bulk metric, the conformal infinity $\{x=y\}$ has different possible asymptotic behaviors close to roots of $P$ or $Q$. We consider \eqref{family PD with a}, whose conformal metric at infinity is:
\begin{equation}
    g_{bdry}=(1-a^2x^4)\left(\frac{1}{P(x)}- \frac{1}{Q(x)}\right)dx^2-\frac{Q(x)}{1-a^2x^4}(d\psi-ax^2d\varphi)^2 + \frac{P(x)}{1-a^2x^4}(d\varphi-ax^2d\psi)^2.\label{conformal boundary metric}
\end{equation}
We will moreover assume that the regularity conditions for the bulk of Proposition \eqref{regularity criterion} are satisfied whenever applicable. Simpler arguments than Sections \ref{one simple root} and \ref{section one double root} imply the following result. 
\begin{prop}\label{regular conformal infinity}
    Under the assumptions of Proposition \ref{regularity criterion}, the conformal metric is smooth. Moreover, if $P$ (or $Q$) has a double root at $x_0$, then the conformal boundary metric of \eqref{conformal boundary metric} has a codimension $2$ separating cusp as described in \eqref{separating cusp}.
\end{prop}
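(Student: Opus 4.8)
The plan is to read off both assertions directly from the explicit conformal representative \eqref{conformal boundary metric}, using the same one-variable expansions carried out for the bulk metric in Section \ref{one simple root} and Section \ref{section one double root}. First, on the open range $(x_1,y_2)$ the metric \eqref{conformal boundary metric} is manifestly smooth and Riemannian: under the hypotheses of Proposition \ref{regularity criterion} one has $P>0$, $Q<0$ and $1-a^2x^4\neq 0$ there, so the $dx^2$-coefficient $(1-a^2x^4)(P^{-1}-Q^{-1})$ and the coefficients $-Q(1-a^2x^4)^{-1}$ and $P(1-a^2x^4)^{-1}$ are all smooth and positive. Hence only the two endpoints $x=x_1$ and $x=y_2$ require attention, and there exactly one of $P,Q$ vanishes (to first order in the simple case), so there is no simultaneous degeneration and the analysis is strictly simpler than that of Lemma \ref{regularity at intersection of rods}.

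Near the simple root $x_1$ of $P$ I would write $P(x)=P'(x_1)(x-x_1)+\mathcal{O}((x-x_1)^2)$; since $Q(x_1)\neq 0$, the term $-(1-a^2x^4)Q(x)^{-1}dx^2$ is smooth and absorbed into the error. Splitting the $P$-angular one-form in the coframe $\theta_1=\varphi+ax_1^2\psi$, $\omega_1=d\psi-ax_1^2d\varphi$ exactly as in Section \ref{one simple root}, the leading part of \eqref{conformal boundary metric} becomes, modulo smooth higher-order terms,
\[
\frac{(1-a^2x_1^4)\,dx^2}{P'(x_1)(x-x_1)}
+\frac{(1-a^2x_1^4)\,P'(x_1)(x-x_1)}{(1+a^2x_1^4)^2}\left(d\theta_1+f(x_1)\omega_1\right)^2+g_\Sigma ,
\]
where $g_\Sigma=-Q(x_1)(1-a^2x_1^4)^{-1}\omega_1^2$ is a smooth positive metric on the codimension-$2$ slice $\{x=x_1\}$ and $f(x_1)$ is a fixed constant ($f(x_1)=0$ if $a=0$) whose value is irrelevant to regularity. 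The substitution making the $dx^2$-term equal to $dr^2$ (the analogue of the one in the proof of Lemma \ref{regularity at one rod}) puts this into the model form $dr^2+\kappa^2 r^2\left(d\theta_1+f(x_1)\omega_1\right)^2+g_\Sigma$ with $\kappa=\frac{|P'(x_1)|}{2(1+a^2x_1^4)}$; comparing with \eqref{conical al}, this is smooth precisely when $\kappa\theta_1$ is $2\pi$-periodic, which is one of the standing hypotheses of Proposition \ref{regularity criterion}. The endpoint $x=y_2$ is handled identically with $(Q,Q',\theta_2,\omega_2)$ in place of $(P,P',\theta_1,\omega_1)$, and the two periodicity conditions are exactly what make the closed-up conformal boundary a smooth closed $3$-manifold; this proves the first assertion.

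For the second, suppose $P$ has a double root, written $x_0$, at the corresponding endpoint, so $P(x)=\tfrac12 P''(x_0)(x-x_0)^2+\mathcal{O}((x-x_0)^3)$ while $Q(x_0)\neq 0$ still makes $-(1-a^2x^4)Q(x)^{-1}dx^2$ a smooth remainder. The same coframe manipulation now gives, at leading order,
\[
(1-a^2x_0^4)\left(\frac{P''(x_0)(x-x_0)^2}{2(1+a^2x_0^4)^2}\left(d\theta_1+f(x_0)\omega_1\right)^2
+\frac{2\,dx^2}{P''(x_0)(x-x_0)^2}\right)+g_\Sigma ,
\]
which is exactly the restriction to $\{x=y\}$ of the separating-cusp model \eqref{separating cusp}; the change of variable $t=-\log|x-x_0|$ exhibits it as $dt^2+(\text{const})\,e^{-2t}\left(d\theta_1+f(x_0)\omega_1\right)^2+g_\Sigma$, a complete codimension-$2$ cusp end attached to the boundary, and the case of $Q$ is symmetric. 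The only genuinely fiddly point, exactly as in the bulk, is the bookkeeping of the positive constants produced by the changes of variables together with checking that the twist term $f\omega_1$ remains smooth and bounded on the slice, so that it does not disturb the model geometries; this is carried out verbatim as in Sections \ref{one simple root} and \ref{section one double root}, which is precisely why the proposition needs no essentially new argument.
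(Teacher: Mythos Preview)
Your argument is correct and is exactly the approach the paper indicates: the paper's own proof is the single sentence that ``simpler arguments than Sections \ref{one simple root} and \ref{section one double root} imply the following result,'' and you have supplied precisely those one-variable expansions of \eqref{conformal boundary metric} at the endpoints, recovering the same cone angle $\kappa=\tfrac{|P'(x_1)|}{2(1+a^2x_1^4)}$ and the same cusp model. One small clarification of phrasing: in the second assertion the double root $x_0$ is not necessarily ``at the corresponding endpoint'' but typically lies in the interior of $(x_1,y_2)$ (this is the situation in Section \ref{section degeneration ads c metric}), which is why the resulting cusp is \emph{separating}; your local expansion near $x_0$ is valid there as well and nothing else changes.
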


We will now focus on the case $a=1$ of \eqref{family PD with a} and we will see that allowing the roots to be at $\pm1$ leads to different degenerate behavior for the conformal infinity alone.

\subsection{At $\pm 1$ a simple root of both $P$ and $Q$: conical singularity}

Let us assume that $1$ is a simple root of both $P$ and $Q$ for the metric \eqref{conformal boundary metric}. The case of $-1$ is treated similarly. As $ x\to 1 $, we obtain that the metric \eqref{conformal boundary metric} is asymptotic to
$$ C_1 dx^2 + C_2 \theta_1(x)^2 + C_3 (x-1)^2\theta_2(x)^2 $$
where $\theta_1(x) \to d\varphi-d\psi$, $\theta_2(x)\to d\varphi+d\psi$ as $x\to 1$, and $C_1 = 4 \left(\frac{1}{P'(1)} - \frac{1}{Q'(1)}\right)$, $C_2 =  \frac{P'(1) - Q'(1)}{4}$ and  $C_3 = -\frac{P'(1)Q'(1)}{P'(1) - Q'(1)} = \left(\frac{1}{P'(1)} - \frac{1}{Q'(1)}\right)^{-1}$.

This yields a codimension $2$ cone-edge singularity of angle $\frac{P'(1)Q'(1)}{2(P'(1) - Q'(1))}$ times the period of $\varphi+\psi$.

\subsection{At $\pm 1$ double root of $P$ and simple root of $Q$: naked singularity} \label{naked singularity}

\vspace{-15pt}

\hspace{-12pt}\begin{minipage}{0.60\textwidth}

Similarly, assuming that $1$ is a double root of $P$ and a simple root of $Q$, the metric \eqref{conformal boundary metric} approaches
\begin{equation}
    \frac{C_1}{(1-x)} dx^2 + C_2 \theta_1(x)^2 + C_3 (x-1)^3\theta_2(x)^2\label{dvp double root simple root}
\end{equation}
for $\theta_1(x) \to d\varphi-d\psi$, $\theta_2(x)\to d\varphi+d\psi$ as $x\to 1$ and where $C_1 = \frac{8}{P''(1)}$, $C_2 = -\frac{Q'(1)}{4}$ and $4C_2C_3 = -\frac{P''(1)}{2}Q'(1)$ so $C_3 = \frac{P''(1)}{2}$. A change of variables $r=2\sqrt{1-x}$ in \eqref{dvp double root simple root} yields the \textit{naked singularity} metric: close to $r=0$,
\begin{equation}
    C_1 dr^2 + C_2 \theta_1(x)^2 + \frac{C_3}{4} r^6\theta_2(x)^2.\label{dvp double root simple root change of var}
\end{equation}
\end{minipage}
\begin{minipage}{0.37\textwidth}
    
    \begin{figure}[H]
\includegraphics[scale = 0.15,trim={0 0 0 150pt},clip]{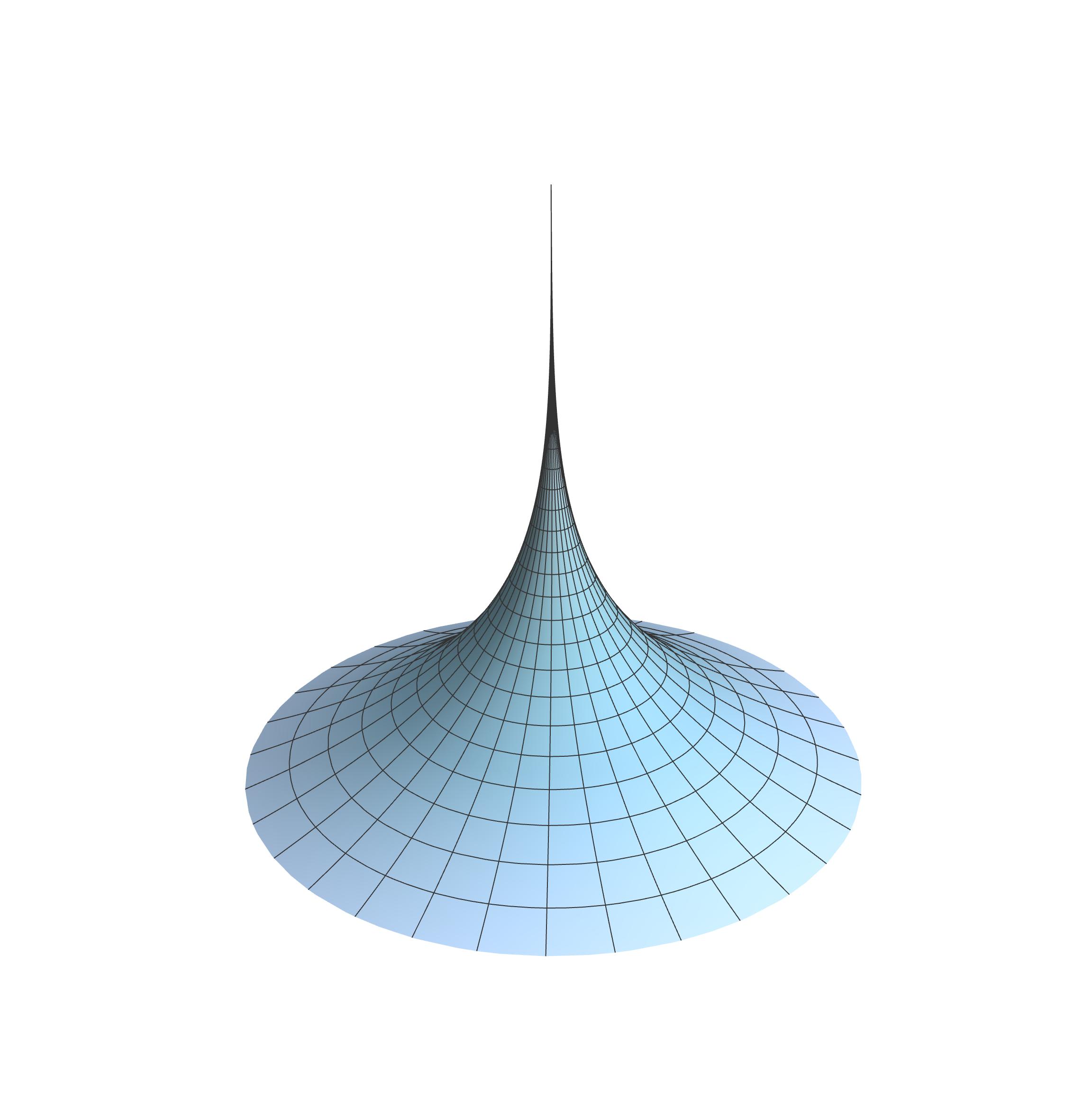}
%Will edit picture later

\vspace{-25pt}

\caption{Naked Singularity, \textbf{not} infinitely long.} \label{Naked Singularity Plot}
\end{figure}
\end{minipage}

\subsection{At $\pm1$ triple root of $P$ and simple root of $Q$: cusp end}

We finally assume that $1$ is a triple root of $P$ and a simple root of $Q$. The metric \eqref{conformal boundary metric} is asymptotic to
\begin{equation}
    \frac{C_1}{(1-x)^2} dx^2 + C_2 \theta_1(x)^2 + C_3 (x-1)^4\theta_2(x)^2\label{dvp triple root simple root}
\end{equation}
where again, $\theta_1(x) \to d\varphi-d\psi$, $\theta_2(x)\to d\varphi+d\psi$ as $x\to 1$ and where $C_1 = \frac{24}{P^{(3)}(1)}$, $C_2 = -\frac{Q'(1)}{4}$ and $C_3 = \frac{P^{(3)}(1)}{6}$. A change of variables $r=-\log(1-x)$ in \eqref{dvp triple root simple root} yields the \textit{cusp end} metric: for $r$ close to $+\infty$,
\begin{equation}
    C_1 dr^2 + C_2 \theta_1(x)^2 + C_3 e^{-4r}\theta_2(x)^2.\label{dvp double root simple root change of var boundary}
\end{equation}
\bibliographystyle{alpha}
\bibliography{ref.bib}
\nocite{*}

\Addresses

\end{document}